\newtheorem{theorem}{Theorem}[section]
\newtheorem{lemma}[theorem]{Lemma}
\newtheorem{proposition}[theorem]{Proposition}
\theoremstyle{definition}
\newtheorem{definition}[theorem]{Definition}
\newtheorem{example}[theorem]{Example}
\theoremstyle{remark}
\newtheorem{remark}[theorem]{Remark}
\numberwithin{equation}{section}
\begin{document}

\title{KMS STATES FOR THE GENERALIZED GAUGE ACTION ON GRAPH ALGEBRAS}

\author{Gilles G. de Castro \and Fernando de L. Mortari}
\address{Departamento de Matemática, Universidade Federal de Santa Catarina, 88040-970 Florianópolis SC, Brazil.}
\email{gilles.castro@ufsc.br \\ fernando.mortari@ufsc.br}
\thanks{Partially supported by Funpesquisa/UFSC\\
The first named author was also supported by project mathamsud U11-MATH05 and Capes/Math-AmSud 013/10}

\begin{abstract}
Given a positive function on the set of edges of an arbitrary directed graph $E=(E^0,E^1)$, we define a one-parameter group of automorphisms on the C*-algebra of the graph $C^*(E)$, and study the problem of finding KMS states for this action. We prove that there are bijective correspondences between KMS states on $C^*(E)$, a certain class of states on its core, and a certain class of tracial states on $C_0(E^0)$. We also find the ground states for this action and give some examples.
\end{abstract}

\maketitle

\section{Introduction}

Given a directed graph $E=(E^0,E^1)$, we can associate to it a C*-algebra $C^*(E)$, and an interesting problem that arises is to find relations between the algebraic properties of the algebra and the combinatorial properties of the graph \cite{MR2135030}. One such problem is to determine the set of KMS states for a certain action on the algebra.

Graph algebras are a generalization of Cuntz algebras and Cuntz-Krieger algebras. For the Cuntz algebra, there is a very natural action of the circle, the gauge action, which can be extended to an action of the real line. The KMS states for this action are studied in \cite{MR500150} and later generalized to a more general action of the line, that can be thought of as a generalized gauge action \cite{MR602475}. The same is done for the Cuntz-Krieger algebras \cite{MR759450}, \cite{MR2057042}.

Recently there were similar results proven for the C*-algebra associated to a finite graph. This is done in \cite{2010arXiv1007.4248K} for an arbitrary finite graph, in \cite{2010arXiv1002.0790I} for a certain class of finite graphs via groupoid C*-algebras and in \cite{2012arXiv1205.2194H} for the Toeplitz C*-algebra of the graph.

Our goal is to generalize these results to the case of an arbitrary graph. First we analyze which conditions the restrictions of a KMS state to the core of $C^*(E)$ and to $C_0(E^0)$ must satisfy. By using a description of the core as an inductive limit, we can build a KMS state on $C^*(E)$ from a tracial state on $C_0(E^0)$ satisfying the conditions found.

In section \ref{section_graph_algebra} we review some of the basic definitions and results about graph algebras as well as the description of the core as an inductive limit. In section \ref{section_KMS} we establish the results concerning KMS states, followed by a discussion on ground states in section \ref{section_ground_states}. In section \ref{section_examples}, some examples are given.

\section{Graph algebras}\label{section_graph_algebra}

\begin{definition}
A \emph{(directed) graph} $E=(E^0,E^1,r,s)$ consists of nonempty sets $E^0$, $E^1$ and functions $r,s:E^1\to E^0$; an element of $E^0$ is called a \emph{vertex} of the graph, and an element of $E^1$ is called an \emph{edge}. For an edge $e$, we say that $r(e)$ is the \emph{range} of $e$ and $s(e)$ is the \emph{source} of $e$.
\end{definition}

\begin{definition}A vertex $v$ in a graph $E$ is called a \emph{source} if $r^{-1}(v)=\emptyset$, and is said to be \emph{singular} if it is either a source, or $r^{-1}(v)$ is infinite.
\end{definition}

\begin{definition}
A \emph{path of length} $n$ in a graph $E$ is a sequence $\mu=\mu_1\mu_2\ldots\mu_n$ such that $r(\mu_i+1)=s(\mu_{i})$ for all $i=1,\ldots,n-1$. We write $|\mu|=n$ for the length of $\mu$ and regard vertices as paths of length $0$. We denote by $E^n$ the set of all paths of length $n$ and $E^*=\cup_{n\geq 0}E^n$. We extend the range and source maps to $E^*$ by defining $s(\mu)=s(\mu_n)$ and $r(\mu)=r(\mu_1)$ if $n\geq 2$ and $s(v)=v=r(v)$ for $n=0$.
\end{definition}

\begin{definition}
Given a graph $E$, we define the \emph{C*-algebra of} $E$ as the universal C*-algebra $C^*(E)$ generated by mutually orthogonal projections $\{p_v\}_{v\in E^0}$ and partial isometries $\{s_e\}_{e\in E^1}$ with mutually orthogonal ranges such that
\begin{enumerate}
\item $s_e^*s_e=p_{s(e)}$;
\item $s_es_e^*\leq p_r(e)$ for every $e\in E^1$;
\item $p_v=\sum_{e\in r^{-1}(v)}s_es_e^*$ for every $v\in E^0$ such that $0<|r^{-1}(v)|<\infty$.
\end{enumerate}

\end{definition}

For a path $\mu=\mu_1\ldots\mu_n$, we denote the composition $s_{\mu_1}\ldots s_{\mu_n}$ by $s_{\mu}$, and for $v\in E^0$ we define $s_v$ to be the projection $p_v$.

Propositions \ref{prop_multiplication}, \ref{prop_gauge_action}, \ref{prop_core_description} and \ref{prop_conditional_expectation} below are found in \cite{MR2135030} (as Corollary 1.15, Proposition 2.1, Proposition 3.2 and Corollary 3.3, respectively) in the context of row-finite graphs, but their proofs hold just the same for general graphs as above.

\begin{proposition}\label{prop_multiplication}
For $\alpha, \beta, \mu, \nu \in E^*$ we have
$$(s_{\mu} s_{\nu}^*)(s_{\alpha}s_{\beta}^*) = \left\{ 
\begin{array}{cc}
s_{\mu\alpha'}s_{\beta}^* & \mathrm{if}\ \alpha=\nu\alpha' \\
s_{\mu}s_{\beta\nu'}^* & \mathrm{if}\ \nu=\alpha\nu' \\
0 & \mathrm{otherwise}
\end{array}
\right. $$
and $C^*(E)=\overline{\mathrm{span}}\{s_\mu s_\nu^*:\mu,\nu\in E^*,\ s(\mu)=s(\nu)\}$.
\end{proposition}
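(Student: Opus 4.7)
The overall approach is to reduce the product $(s_\mu s_\nu^*)(s_\alpha s_\beta^*)$ to understanding the middle sandwich $s_\nu^* s_\alpha$, which itself reduces to edge-level computations. The crucial preliminary identity to establish first is $s_e^* s_f = \delta_{e,f}\, p_{s(e)}$ for $e,f\in E^1$. The diagonal case is relation~(1); for $e\ne f$, one uses that $s_e s_e^*$ and $s_f s_f^*$ are mutually orthogonal projections to deduce $s_e s_e^* s_f = 0$ (multiplying the orthogonality relation on the right by $s_f$ and invoking $s_f s_f^* s_f = s_f$), and then $s_e^* s_f = s_e^*(s_e s_e^*) s_f \cdot \ldots $ gives $s_e^* s_f = 0$ after one more application of~(1).

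Building on this, the next step is to prove by induction on $|\mu|$ that $s_\mu$ is a partial isometry with $s_\mu^* s_\mu = p_{s(\mu)}$, absorbing intermediate vertex projections via $p_{r(e)} s_e = s_e$, a consequence of~(2). Then, to compute $s_\nu^* s_\alpha$, one strips matching edges from the left of $\nu$ and $\alpha$ one at a time. Three outcomes arise according as the paths disagree at some position (giving zero by the edge-level identity), or one path is a prefix of the other; in those prefix cases, the telescoping leaves a single vertex projection $p_{s(\nu)}$ or $p_{s(\alpha)}$ which is absorbed into the adjoining $s_\mu$ or $s_\beta^*$, producing the three branches of the stated formula.

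For the density assertion, one first observes that $s_\mu s_\nu^* = s_\mu p_{s(\mu)} p_{s(\nu)} s_\nu^*$ vanishes whenever $s(\mu)\ne s(\nu)$, by the mutual orthogonality of the $p_v$. The collection $\{s_\mu s_\nu^* : s(\mu)=s(\nu)\}$ is manifestly closed under the involution, and by the multiplication formula just proved it is closed under products; hence its linear span is a $*$-subalgebra of $C^*(E)$. Since this span contains every generator, namely $p_v = s_v s_v^*$ and $s_e = s_e s_{s(e)}^*$ (regarding vertices as length-zero paths), its closure is all of $C^*(E)$. The main obstacle is the bookkeeping in the inductive cancellation of the middle step, where one must carefully track at which stage a common prefix is exhausted and verify that the residual vertex projection is correctly absorbed into the neighbouring partial isometry.
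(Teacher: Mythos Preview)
Your argument is correct and is precisely the standard one: the paper itself does not supply a proof of this proposition but refers to Corollary~1.15 of Raeburn's monograph \cite{MR2135030}, where the computation proceeds exactly as you outline (reduce to $s_\nu^* s_\alpha$, use $s_e^* s_f = \delta_{e,f}\, p_{s(e)}$ and the partial-isometry identity $s_\mu^* s_\mu = p_{s(\mu)}$, then telescope). Your density argument via closure under $*$ and products is likewise the one given there.
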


\begin{proposition}\label{prop_gauge_action}
Let $E$ be a graph. Then there is an action $\gamma$ of $\mathbb{T}$ on $C^*(E)$, called a \emph{gauge action}, such that $\gamma_{z}(s_e)=zs_e$ for every $e\in E^1$ and $\gamma_{z}(p_v)=p_v$ for every $v\in E^0$.
\end{proposition}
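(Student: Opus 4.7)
The plan is to exploit the universal property of $C^*(E)$. For each fixed $z \in \mathbb{T}$, I would first check that the family $\{p_v\}_{v \in E^0} \cup \{zs_e\}_{e \in E^1}$ still satisfies the defining relations of the graph algebra: the projections $p_v$ are unchanged and hence mutually orthogonal; $(zs_e)^*(zs_e) = \bar{z}zs_e^*s_e = p_{s(e)}$; the ranges $(zs_e)(zs_e)^* = s_es_e^*$ are unaltered, so they remain mutually orthogonal and dominated by $p_{r(e)}$; and the third Cuntz--Krieger relation at each regular vertex $v$ survives because the $s_es_e^*$ are independent of $z$. By universality this produces a $*$-homomorphism $\gamma_z: C^*(E) \to C^*(E)$ with the desired action on generators.

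Next I would verify on generators that $\gamma_z \circ \gamma_w = \gamma_{zw}$ and $\gamma_1 = \mathrm{id}_{C^*(E)}$, so that each $\gamma_z$ is an automorphism with inverse $\gamma_{\bar z}$ and $z \mapsto \gamma_z$ is a group homomorphism into $\mathrm{Aut}(C^*(E))$.

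The only part requiring a little care is strong continuity of $z \mapsto \gamma_z$. By Proposition \ref{prop_multiplication}, the $*$-algebra $A_0 = \mathrm{span}\{s_\mu s_\nu^* : \mu,\nu \in E^*, s(\mu)=s(\nu)\}$ is norm-dense in $C^*(E)$. On a generator $s_\mu s_\nu^*$ one computes
\[
\gamma_z(s_\mu s_\nu^*) = z^{|\mu|-|\nu|}\, s_\mu s_\nu^*,
\]
so $z \mapsto \gamma_z(a)$ is (norm-)continuous for every $a \in A_0$. Since each $\gamma_z$ is a $*$-homomorphism and therefore contractive, a standard $\varepsilon/3$ argument extends continuity to all of $C^*(E)$.

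The main obstacle, to the extent that there is one, is the continuity step; the rest is a routine application of universality, since multiplication by a unimodular scalar preserves the Cuntz--Krieger relations almost tautologically.
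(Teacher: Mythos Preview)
Your proposal is correct and follows exactly the standard route: the paper does not prove this proposition itself but cites \cite{MR2135030}, and the argument there (and the paper's own proof of the analogous Proposition~\ref{prop_multiplication}-style result for $\sigma^c$ in Section~\ref{section_KMS}) proceeds just as you describe---universality to build $\gamma_z$, the group law on generators, and an $\varepsilon/3$ argument on the dense span of $s_\mu s_\nu^*$ for strong continuity.
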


\begin{definition}
The \emph{core} of the algebra $C^*(E)$ is the fixed-point subalgebra for the gauge action, denoted by $C^*(E)^{\gamma}$.
\end{definition}

\begin{proposition}\label{prop_core_description}
$C^*(E)^{\gamma}=\overline{\mathrm{span}}\{s_\mu s_\nu^*:\mu,\nu\in E^*,\ s(\mu)=s(\nu),\ |\mu|=|\nu|\}$.
\end{proposition}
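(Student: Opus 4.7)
The plan is to combine the explicit spanning description from Proposition \ref{prop_multiplication} with a standard averaging argument over the gauge action. First I would observe that for any $\mu,\nu\in E^*$ and $z\in\mathbb{T}$, one has $\gamma_z(s_\mu s_\nu^*)=z^{|\mu|-|\nu|}s_\mu s_\nu^*$; in particular, when $|\mu|=|\nu|$ the element $s_\mu s_\nu^*$ is fixed by every $\gamma_z$, and so the inclusion $\supseteq$ is immediate since the fixed-point algebra is closed.

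For the reverse inclusion I would introduce the conditional expectation
\[
\Phi:C^*(E)\to C^*(E)^\gamma,\qquad \Phi(a)=\int_{\mathbb{T}}\gamma_z(a)\,dz,
\]
where the integral is taken with respect to normalized Haar measure on $\mathbb{T}$. One needs to check that $\Phi$ is well-defined, linear, contractive (hence continuous), that its image lies in the fixed-point algebra, and that $\Phi(a)=a$ for every $a\in C^*(E)^\gamma$. All of these follow from the strong continuity of $\gamma$ together with the invariance of Haar measure, by routine arguments. Evaluating $\Phi$ on the spanning elements gives
\[
\Phi(s_\mu s_\nu^*)=\Bigl(\int_{\mathbb{T}}z^{|\mu|-|\nu|}\,dz\Bigr)s_\mu s_\nu^*=\begin{cases}s_\mu s_\nu^* & \text{if }|\mu|=|\nu|,\\ 0 & \text{otherwise.}\end{cases}
\]

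With these pieces in hand, the proof concludes quickly: given $a\in C^*(E)^\gamma$ and $\varepsilon>0$, Proposition \ref{prop_multiplication} provides a finite linear combination $b=\sum c_{\mu,\nu}s_\mu s_\nu^*$ with $\|a-b\|<\varepsilon$. Applying the contractive map $\Phi$ and using $\Phi(a)=a$ yields $\|a-\Phi(b)\|<\varepsilon$, and $\Phi(b)$ is a finite linear combination of terms $s_\mu s_\nu^*$ with $|\mu|=|\nu|$. Hence $a$ lies in the claimed closed span.

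I do not expect a serious obstacle here; the only point that requires some care is justifying the existence and the basic properties of the conditional expectation $\Phi$, which hinges on the strong continuity of the gauge action established in Proposition \ref{prop_gauge_action}. Everything else is bookkeeping on the spanning set provided by Proposition \ref{prop_multiplication}.
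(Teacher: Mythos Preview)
Your argument is correct and is precisely the standard one. The paper does not supply an independent proof of this proposition; it merely cites Proposition~3.2 of \cite{MR2135030}, whose proof is exactly the averaging argument you outline (and which simultaneously yields the conditional expectation of Proposition~\ref{prop_conditional_expectation}, stated there as Corollary~3.3).
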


\begin{proposition}\label{prop_conditional_expectation} There is a conditional expectation $\Phi:C^*(E)\to C^*(E)^{\gamma}$ such that $\Phi(s_{\mu}s_{\nu}^*)=[|\mu|=|\nu|]s_{\mu}s_{\nu}^*$.
\end{proposition}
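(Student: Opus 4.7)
The plan is to construct $\Phi$ by averaging over the gauge action with respect to normalized Haar measure on $\mathbb{T}$. Concretely, I would define
\[
\Phi(a) = \int_{\mathbb{T}} \gamma_z(a)\, dz
\]
for every $a\in C^*(E)$, where $dz$ denotes the Haar measure on the circle.

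Before this definition makes sense, I need to know that the orbit map $z\mapsto \gamma_z(a)$ is norm continuous, so that the integral can be interpreted as a Bochner integral in $C^*(E)$. Using Proposition \ref{prop_multiplication}, it suffices to check continuity on finite linear combinations of elements of the form $s_\mu s_\nu^*$, on which $\gamma_z$ acts by multiplication by $z^{|\mu|-|\nu|}$ (by Proposition \ref{prop_gauge_action}); continuity then extends to all of $C^*(E)$ by a standard $\varepsilon/3$ argument using the contractivity of each $\gamma_z$.

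Once $\Phi$ is defined, the verification proceeds in three short steps. First, translation invariance of Haar measure gives $\gamma_w\circ\Phi = \Phi$ for every $w\in\mathbb{T}$, so the image of $\Phi$ lies in $C^*(E)^\gamma$; conversely, $\Phi$ fixes $C^*(E)^\gamma$ pointwise, so $\Phi$ is a norm-one projection. Second, on a spanning element the computation is explicit:
\[
\Phi(s_\mu s_\nu^*) = \left(\int_{\mathbb{T}} z^{|\mu|-|\nu|}\, dz\right) s_\mu s_\nu^* = [|\mu|=|\nu|]\, s_\mu s_\nu^*,
\]
which is the desired formula. Third, by Tomiyama's theorem any norm-one projection between C*-algebras is automatically a conditional expectation (completely positive and $C^*(E)^\gamma$-bimodular), so no separate bimodularity check is needed.

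I expect no real obstacle here; the only point requiring a little care is the continuity of the orbit maps, since $\mathbb{T}$ is compact but $C^*(E)$ is not assumed separable when $E$ is arbitrary. That issue is handled by reducing to the dense $*$-subalgebra spanned by the $s_\mu s_\nu^*$, on which continuity is immediate from the formula $\gamma_z(s_\mu s_\nu^*)=z^{|\mu|-|\nu|}s_\mu s_\nu^*$.
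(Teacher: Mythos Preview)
Your proposal is correct and is the standard averaging-over-the-gauge-action argument. The paper does not actually prove this proposition itself; it simply cites Corollary 3.3 of \cite{MR2135030}, remarking that the proof given there for row-finite graphs carries over verbatim to arbitrary graphs. The argument in that reference is precisely the Haar-measure averaging you outline, so your approach coincides with the intended one.
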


It is useful to describe the core as an inductive limit of subalgebras, as was done in an appendix in \cite{2011arXiv1107.1019H}. The idea is as follows. For $k\geq 0$ define the sets

$$F_k=\overline{\mathrm{span}}\{s_\mu s_\nu^*:\mu,\nu\in E^k,\ s(\mu)=s(\nu)\},$$
$$\mathcal{E}_k=\overline{\mathrm{span}}\{s_\mu s_\nu^*:\mu,\nu\in E^k\ {\rm and}\ s(\mu)=s(\nu)\ {\rm is\ singular}\},$$
$$C_k=F_0+\cdots+F_k.$$

Also, for a given vertex $v$ we define
$$F_k(v)=\overline{\mathrm{span}}\{s_\mu s_\nu^*:\mu,\nu\in E^k,\ s(\mu)=s(\nu)=v\}$$
so that
\begin{equation}\label{eq:decompostionFk}
F_k=\bigoplus_{v\in E^0}F_k(v)
\end{equation}
as a direct sum of C*-algebras.

\begin{lemma}\label{lemma_approxunit}
Let $\Lambda$ be the set of all finite subsets of $E^k$ and for $\lambda\in\Lambda$ define
$$u_{\lambda}=\sum_{\mu\in\lambda}s_{\mu}s_{\mu}^*.$$
Then $\{u_{\lambda}\}_{\lambda\in\Lambda}$ is an approximate unit of $F_k$ consisting of projections.
\end{lemma}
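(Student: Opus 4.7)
The plan is to verify three things: each $u_\lambda$ is a projection, the index set $\Lambda$ is upward directed, and $u_\lambda a \to a$ and $a u_\lambda \to a$ for every $a \in F_k$. The whole argument rests on extracting orthogonality from the multiplication rule (Proposition \ref{prop_multiplication}).

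First I would check that $\{s_\mu s_\mu^*\}_{\mu\in E^k}$ is a family of mutually orthogonal projections. Each $s_\mu s_\mu^*$ is self-adjoint, and by Proposition \ref{prop_multiplication} the product $(s_\mu s_\mu^*)(s_{\mu'} s_{\mu'}^*)$ is nonzero only when one of $\mu,\mu'$ extends the other; since $|\mu|=|\mu'|=k$, this forces $\mu=\mu'$, in which case the product equals $s_\mu s_\mu^*$. Thus $u_\lambda$, as a finite sum of pairwise orthogonal projections, is itself a projection, and $\|u_\lambda\|\le 1$. Directedness is immediate, since the union of two finite subsets of $E^k$ is again a finite subset.

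For the approximate unit property, I would first treat the generators. Given $\alpha,\beta\in E^k$ with $s(\alpha)=s(\beta)$ and any $\lambda\in\Lambda$ with $\alpha\in\lambda$, Proposition \ref{prop_multiplication} yields
$$u_\lambda s_\alpha s_\beta^* \;=\; \sum_{\mu\in\lambda} s_\mu s_\mu^* s_\alpha s_\beta^* \;=\; s_\alpha s_\beta^*,$$
because only the $\mu=\alpha$ summand survives; analogously $s_\alpha s_\beta^* u_\lambda = s_\alpha s_\beta^*$ whenever $\beta\in\lambda$. Consequently, for any finite linear combination $b$ of such generators there is a $\lambda_0$ (collecting all paths appearing on either side) with $u_\lambda b = b u_\lambda = b$ for every $\lambda\supseteq\lambda_0$. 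For a general $a\in F_k$, a standard $\epsilon/3$ argument using an approximation of $a$ by such a finite combination and the bound $\|u_\lambda\|\le 1$ then gives $\|u_\lambda a - a\|\to 0$ and $\|a u_\lambda - a\|\to 0$.

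I do not anticipate a genuine obstacle: the calculation is a direct application of Proposition \ref{prop_multiplication}, and the only point requiring a moment of care is the equal-length case analysis that produces the orthogonality of the range projections $s_\mu s_\mu^*$ for distinct $\mu\in E^k$.
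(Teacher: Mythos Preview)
Your proposal is correct and is exactly the argument the paper has in mind: the paper's proof is the single sentence ``This is a direct consequence of Proposition~\ref{prop_multiplication},'' and you have simply unpacked that consequence in full detail. The orthogonality of the range projections $s_\mu s_\mu^*$ for distinct $\mu\in E^k$, the resulting projection property of $u_\lambda$, and the eventual-absorption argument on generators followed by an $\varepsilon/3$ approximation are precisely what that one-line proof is pointing to.
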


\begin{proof}
This is a direct consequence of Proposition \ref{prop_multiplication}.
\end{proof}

The following result is a combination of Proposition A.1 and Lemma A.2 in \cite{2011arXiv1107.1019H}.

\begin{proposition}\label{prop_Ck_description}With the notation as above for a graph $E$, the following hold for $k\geq 0$:
\begin{itemize}
\item[(a)] $C_k$ is a C*-subalgebra of $C^*(E)^{\gamma}$, $F_{k+1}$ is an ideal in $C_k$, $C_k\subseteq C_{k+1}$ and 
$$C^*(E)^{\gamma}=\varinjlim C_k.$$
\item[(b)] $F_k\cap F_{k+1}=\bigoplus\{F_k(v): 0<|r^{-1}(v)|<\infty\}.$ (C*-algebraic direct sum)
\item[(c)] $C_k=\mathcal{E}_0\oplus\mathcal{E}_1\oplus\cdots\oplus\mathcal{E}_{k-1}\oplus F_k$ (vector space direct sum)
\end{itemize}
\end{proposition}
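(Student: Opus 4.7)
The plan is to treat parts (a), (b), and (c) in sequence, with (a) following directly from Proposition \ref{prop_multiplication}, (b) following from a ``matrix entry'' argument at singular vertices, and (c) by induction on $k$ using (b).

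For (a), the key computational fact I would extract from Proposition \ref{prop_multiplication} is that $F_i \cdot F_j \subseteq F_{\max(i,j)}$: a nonzero product $(s_\mu s_\nu^*)(s_\alpha s_\beta^*)$ with $|\mu|=|\nu|=i \le j=|\alpha|=|\beta|$ forces $\alpha = \nu \alpha'$ and yields $s_{\mu\alpha'}s_\beta^* \in F_j$, and symmetrically for $i \ge j$. Since each $F_i$ is visibly self-adjoint and sits inside $C^*(E)^\gamma$ by Proposition \ref{prop_core_description}, this immediately makes $C_k$ a $*$-subalgebra of the core, and the same multiplication formula shows $C_k \cdot F_{k+1}$ and $F_{k+1} \cdot C_k$ both lie in $F_{k+1}$, giving the stated ideal property. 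The containment $C_k \subseteq C_{k+1}$ is built into the definition, and $C^*(E)^\gamma = \varinjlim C_k$ follows from Proposition \ref{prop_core_description} since every spanning element $s_\mu s_\nu^*$ with $|\mu|=|\nu|=k$ already belongs to $F_k \subseteq C_k$.

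For (b), the inclusion $\supseteq$ is routine: for $v$ with $0 < |r^{-1}(v)| < \infty$ and $s_\mu s_\nu^* \in F_k(v)$, relation (3) of the graph algebra definition lets us write $s_\mu s_\nu^* = s_\mu p_v s_\nu^* = \sum_{e\in r^{-1}(v)} s_{\mu e}s_{\nu e}^* \in F_{k+1}$. For the reverse inclusion, I would take $x \in F_k \cap F_{k+1}$, decompose $x = \sum_v x_v$ using (\ref{eq:decompostionFk}), and aim to show $x_v = 0$ whenever $v$ is singular. Fixing $\mu_0, \nu_0 \in E^k$ ending at $v$, the matrix entry $s_{\mu_0}^* x_v s_{\nu_0}$ equals $c\,p_v$ for some scalar $c$ by Proposition \ref{prop_multiplication} applied to $x_v \in F_k(v)$; approximating $x$ by finite sums in $F_{k+1}$ instead shows that the same quantity lies in the closed span $B_v$ of $\{s_e s_{e'}^*: e, e' \in r^{-1}(v),\ s(e) = s(e')\}$. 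If $v$ is a source, $B_v = \{0\}$ forces $c=0$ at once. If $|r^{-1}(v)| = \infty$, any element $b$ in the algebraic span of $B_v$ only involves edges in some finite set $F \subset r^{-1}(v)$; picking $e^*\in r^{-1}(v)\setminus F$ gives $s_{e^*}^* b = 0$ while $s_{e^*}^* p_v = s_{e^*}^* \neq 0$, so $\|p_v - b\| \ge 1$, ruling out $p_v \in B_v$ and forcing $c = 0$. As this is valid for every pair $\mu_0,\nu_0$, we get $x_v = 0$.

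For (c), I would induct on $k$, with the base $k=0$ trivial. For the step, the decomposition (\ref{eq:decompostionFk}) combined with (b) gives $F_k = \mathcal{E}_k \oplus F_k^{\mathrm{reg}}$ where $F_k^{\mathrm{reg}} = F_k \cap F_{k+1} \subseteq F_{k+1}$, and substituting into $C_{k+1} = C_k + F_{k+1}$ with the inductive hypothesis yields $C_{k+1} = \mathcal{E}_0 + \cdots + \mathcal{E}_k + F_{k+1}$. For directness, suppose $\sum_{j=0}^k x_j + y = 0$ with $x_j \in \mathcal{E}_j$ and $y \in F_{k+1}$; then $x_k + y = -\sum_{j<k}x_j \in C_{k-1} \subseteq C_k$ forces $y \in F_{k+1} \cap C_k$, and after establishing $F_{k+1} \cap C_k \subseteq F_k^{\mathrm{reg}}$, the inductive uniqueness in $C_k$ kills all $x_j$ with $j<k$, and finally $x_k = -y \in \mathcal{E}_k \cap F_{k+1} = 0$ by the C*-direct sum decomposition of $F_k$. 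The main obstacle is precisely the intermediate claim $F_{k+1} \cap C_k \subseteq F_k^{\mathrm{reg}}$: it says that no element of $F_{k+1}$ can pick up a nonzero singular component at any earlier level under the inductive decomposition, and proving it requires extracting such components via the approximate units of Lemma \ref{lemma_approxunit} supported at singular vertices and then rerunning the source/infinite-range dichotomy from part (b) in the presence of prefixes of length shorter than $k$.
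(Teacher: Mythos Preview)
The paper does not prove this proposition at all; it simply quotes it as ``a combination of Proposition~A.1 and Lemma~A.2'' of an Huef--Laca--Raeburn--Sims paper, so there is no in-paper argument to compare against. Your outline for (a) and (b) is sound and essentially what one finds in that reference, with one omission: in (a) you only argue that $C_k$ is a $*$-subalgebra, not that the algebraic sum $F_0+\cdots+F_k$ is \emph{closed}. Closedness is not automatic for finite sums of closed subspaces; it comes out only after (c) is established together with bounded projections onto the summands, so you should flag that (a) is completed once (c) is in hand.

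The interesting point is (c). You correctly isolate the crux as the containment $F_{k+1}\cap C_k\subseteq F_k\cap F_{k+1}$, and you propose to prove it directly, inside the induction, by compressing with $s_{\mu_0}^*(\,\cdot\,)s_{\nu_0}$ at a singular vertex and reusing the source/infinite-receiver dichotomy from (b). That argument can indeed be made rigorous (for the smallest level $j$ with $x_j\neq 0$, the compression yields $c\,p_v$ plus terms in $\bigcup_{m\ge 1}F_m$ with range $v$, and one rules out $p_v$ lying in that span exactly as in (b)), so your plan is viable, not merely heuristic. Note, however, that the paper orders things the other way around: it \emph{derives} this containment as Proposition~\ref{prop_intersection}, using the already-established direct-sum decomposition (c) and uniqueness of the $\mathcal{E}_0\oplus\cdots\oplus\mathcal{E}_k\oplus F_{k+1}$ splitting. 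So your route builds Proposition~\ref{prop_intersection} into the inductive step for (c), while the paper treats (c) as a black box from the literature and then reads off Proposition~\ref{prop_intersection} as a corollary. Your approach is more self-contained; the paper's is shorter but leans entirely on the cited reference.
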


With the above, we can now prove the following.

\begin{proposition}\label{prop_intersection} For each $k\geq 0$, $C_k\cap F_{k+1}=F_k\cap F_{k+1}$.
\end{proposition}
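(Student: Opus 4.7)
The plan is to prove both inclusions separately. The inclusion $F_k\cap F_{k+1}\subseteq C_k\cap F_{k+1}$ is immediate because $F_k$ is a summand of $C_k$ by Proposition \ref{prop_Ck_description}(c), so the substantive content lies in showing that any element of $C_k$ which happens to lie in $F_{k+1}$ must already belong to $F_k$.

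To that end, I would take $x\in C_k\cap F_{k+1}$ and invoke the vector space direct sum in Proposition \ref{prop_Ck_description}(c) at level $k$ to write $x=e_0+e_1+\cdots+e_{k-1}+f_k$ uniquely, with $e_j\in\mathcal{E}_j$ and $f_k\in F_k$. The next step is to split $f_k$ itself along the partition of vertices into singular ones and regular ones (vertices $v$ with $0<|r^{-1}(v)|<\infty$), which is legitimate because (\ref{eq:decompostionFk}) is a C*-algebraic direct sum: write $f_k=f_k^s+f_k^r$ where $f_k^s=\sum_{v\ \text{singular}} f_k(v)$ and $f_k^r=\sum_{v\ \text{regular}} f_k(v)$. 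From the definition of $\mathcal{E}_k$, $f_k^s\in\mathcal{E}_k$. For the regular part, the Cuntz-Krieger relation yields $s_\mu s_\nu^{*}=s_\mu p_v s_\nu^{*}=\sum_{e\in r^{-1}(v)} s_{\mu e}s_{\nu e}^{*}$ whenever $s(\mu)=s(\nu)=v$ is regular, a finite sum lying in $F_{k+1}$; hence $f_k^r\in F_k\cap F_{k+1}$, in line with Proposition \ref{prop_Ck_description}(b).

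Finally, I would apply Proposition \ref{prop_Ck_description}(c) one level higher: $C_{k+1}=\mathcal{E}_0\oplus\cdots\oplus\mathcal{E}_k\oplus F_{k+1}$. Reading the identity $x=e_0+\cdots+e_{k-1}+f_k^s+f_k^r$ inside this decomposition, with $e_j\in\mathcal{E}_j$ for $j<k$, $f_k^s\in\mathcal{E}_k$, and $f_k^r\in F_{k+1}$, and comparing with the fact that $x\in F_{k+1}$ has trivial components in every $\mathcal{E}_j$ for $j\leq k$, uniqueness forces $e_0=\cdots=e_{k-1}=0$ and $f_k^s=0$. Therefore $x=f_k^r\in F_k\cap F_{k+1}$.

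The main obstacle is the bookkeeping across the two different decompositions of $C_k$ and $C_{k+1}$: the summand $F_k$ of $C_k$ does not embed into a single summand of $C_{k+1}$ but rather straddles $\mathcal{E}_k$ and $F_{k+1}$, and the Cuntz-Krieger relation is precisely the mechanism that separates these contributions along the singular/regular partition of source vertices. Once this split is in hand, uniqueness in the $C_{k+1}$-decomposition closes the argument without any further analytical work.
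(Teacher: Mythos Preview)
Your argument is correct and is essentially the paper's own proof carried out in full detail: both proceed by decomposing $x$ via Proposition~\ref{prop_Ck_description}(c) in $C_k$ and in $C_{k+1}$, split $F_k$ as $\mathcal{E}_k\oplus(F_k\cap F_{k+1})$ along the singular/regular partition of source vertices, and then invoke uniqueness of the direct sum decomposition in $C_{k+1}$ to kill the $\mathcal{E}_j$-components. The only difference is that the paper appeals directly to the identity $F_k=\mathcal{E}_k\oplus(F_k\cap F_{k+1})$ (which follows from Proposition~\ref{prop_Ck_description}(b) and (\ref{eq:decompostionFk})), whereas you re-derive the inclusion $f_k^r\in F_{k+1}$ from the Cuntz--Krieger relation.
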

\begin{proof}
Obviously one has $F_k\cap F_{k+1}\subseteq C_k\cap F_{k+1}$. On the other hand, given $x\in C_k\cap F_{k+1}$, one can decompose $x$ as sums in $C_k$ and $C_{k+1}$ with Proposition \ref{prop_Ck_description}(c), use the fact that $F_k=\mathcal{E}_k\oplus F_k\cap F_{k+1}$ and the uniqueness of the direct sum decompositions of $x$ to conclude that $x\in F_k$.
\end{proof}

\section{KMS states for the generalized gauge action}\label{section_KMS}

In this section we define an action on $C^*(E)$ from a function $c:E^1\to \mathbb{R}^*_+$ similar to what is done in \cite{MR602475} for the Cuntz algebras and in \cite{MR2057042} for the Cuntz-Krieger algebras. We will always suppose that there is a constant $k>0$ such that $c(e)>k$ for all $e\in E^1$ and that $\beta>0$. Observe that in this case $c^{-\beta}$ is bounded.

We extend a function as above to a function $c:E^*\to\mathbb{R}^*_+$ by defining $c(v)=1$ if $v\in E^0$ and $c(\mu)=c(\mu_1)\ldots c(\mu_n)$ if $\mu=\mu_1\cdots\mu_n\in E^n$.

\begin{proposition}
Given a function $c:E^1\to\mathbb{R}^*_+$, there is a strongly continuous action $\sigma^c:\mathbb{R}\to\mathrm{Aut}(C^*(E))$ given by $\sigma_t^c(p_v)=p_v$ for all $v\in E^0$ and $\sigma_t^c(s_e)=c(e)^{it}s_e$ for all $e\in E^1$.
\end{proposition}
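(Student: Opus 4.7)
The plan is to invoke the universal property of $C^*(E)$. For each fixed $t\in\mathbb{R}$, I would set $P_v:=p_v$ for $v\in E^0$ and $S_e:=c(e)^{it}s_e$ for $e\in E^1$, and then check that the family $\{P_v,S_e\}$ satisfies the defining relations of $C^*(E)$. Since $|c(e)^{it}|=1$, the $P_v$ are mutually orthogonal projections, the $S_e$ are partial isometries with mutually orthogonal ranges, and one has $S_e^*S_e=\overline{c(e)^{it}}c(e)^{it}s_e^*s_e=p_{s(e)}=P_{s(e)}$; likewise $S_eS_e^*=s_es_e^*\leq p_{r(e)}=P_{r(e)}$, and for each regular $v$ we get $\sum_{e\in r^{-1}(v)}S_eS_e^*=\sum_{e\in r^{-1}(v)}s_es_e^*=p_v=P_v$. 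The universal property then yields a $*$-homomorphism $\sigma_t^c:C^*(E)\to C^*(E)$ with the required values on generators.

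Next I would check the group law. By construction $\sigma_0^c$ acts as the identity on generators, hence equals the identity on all of $C^*(E)$, and for $s,t\in\mathbb{R}$ both $\sigma_s^c\circ\sigma_t^c$ and $\sigma_{s+t}^c$ send $p_v\mapsto p_v$ and $s_e\mapsto c(e)^{i(s+t)}s_e$, so they agree on generators and hence on $C^*(E)$. Taking $s=-t$ shows each $\sigma_t^c$ is an automorphism, so we actually have a group homomorphism $\sigma^c:\mathbb{R}\to\mathrm{Aut}(C^*(E))$.

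For strong continuity, using Proposition \ref{prop_multiplication} a direct induction gives $\sigma_t^c(s_\mu s_\nu^*)=c(\mu)^{it}c(\nu)^{-it}s_\mu s_\nu^*$ for $\mu,\nu\in E^*$ with $s(\mu)=s(\nu)$. Thus $t\mapsto\sigma_t^c(s_\mu s_\nu^*)$ is norm-continuous for each such generator, and by linearity this continuity extends to the dense $*$-subalgebra of finite linear combinations of the $s_\mu s_\nu^*$. Given $a\in C^*(E)$ and $\varepsilon>0$, choose such a finite combination $b$ with $\|a-b\|<\varepsilon/3$; since each $\sigma_t^c$ is isometric, the standard $\varepsilon/3$ estimate $\|\sigma_t^c(a)-\sigma_s^c(a)\|\leq 2\|a-b\|+\|\sigma_t^c(b)-\sigma_s^c(b)\|$ yields continuity of $t\mapsto\sigma_t^c(a)$.

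The only nuance I foresee is the summation relation at regular vertices, where I want the phases $c(e)^{it}$ to cancel in $S_eS_e^*$; this is immediate but is the one place where the specific form $c(e)^{it}$ (rather than an arbitrary scalar) is used. Everything else is bookkeeping via the universal property and an $\varepsilon/3$ argument on the dense span of matrix units $s_\mu s_\nu^*$.
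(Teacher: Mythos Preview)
Your proposal is correct and follows essentially the same approach as the paper: invoke the universal property after verifying the Cuntz--Krieger relations for the rescaled generators, deduce the group law and invertibility on generators, and establish strong continuity via an $\varepsilon/3$ argument on the dense span of the $s_\mu s_\nu^*$. The only minor quibble is your closing remark: the unimodularity of $c(e)^{it}$ is used not just in the summation relation but already in verifying $S_e^*S_e=P_{s(e)}$ and that $S_e$ is a partial isometry, as you in fact did.
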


\begin{proof}
Let $T_e=c(e)^{it}s_e$ and note that $T_e$ is a partial isometry with $T_e^*T_e=s_e^*s_e$ and $T_eT_e^*=s_es_e^*$. It follows that the sets $\{p_v\}_{v\in E^0}$ and $\{T_e\}_{e\in E^1}$ satisfy the same relations as $\{p_v\}_{v\in E^0}$ and $\{s_e\}_{e\in E^1}$. By the universal property, there is a homomorphism $\sigma_t^c:C^*(E)\to C^*(E)$ such that $\sigma_t^c(p_v)=p_v$ for all $v\in E^0$ and $\sigma_t^c(s_e)=T_e=c(e)^{it}s_e$ for all $e\in E^1$.

It is easy to see that $\sigma_{t_1}^c\circ\sigma_{t_2}^c=\sigma_{t_1+t_2}^c$ and $\sigma_0^c=Id$. Hence $\sigma_t^c$ is an automorphism with inverse $\sigma_{-t}^c$.

To prove continuity, let $a\in C^*(E)$, $t\in\mathbb{R}$ and $\varepsilon>0$. Take $x$ to be a finite sum $x=\sum_{\mu,\nu\in E^*}\lambda_{\mu,\nu}s_{\mu}s_{\nu}^*$ such that $\|a-x\|<\varepsilon/3$. For each pair of paths $\mu,\nu$ with $\lambda_{\mu,\nu}\neq 0$, there is $\delta_{\mu,\nu}$ such that 

$$|c(\mu)^{it}c(\nu)^{-it}-c(\mu)^{iu}c(\nu)^{-iu}|<\frac{\varepsilon}{3\sum_{\mu,\nu\in E^*}\|\lambda_{\mu,\nu}s_{\mu}s_{\nu}^*\|}$$
for all $u\in\mathbb{R}$ with $|t-u|<\delta_{\mu,\nu}$. If we take $\delta$ to be the minimum of all such $\delta_{\mu,\nu}$, then for all $u\in\mathbb{R}$ with $|t-u|<\delta$ we have

$$\|\sigma_t^c(x)-\sigma_u^c(x)\|=\left\|\sum_{\mu,\nu\in E^*}(c(\mu)^{it}c(\nu)^{-it}-c(\mu)^{iu}c(\nu)^{-iu})\lambda_{\mu,\nu}s_{\mu}s_{\nu}^*
\right\|<$$
$$\frac{\varepsilon}{3\sum_{\mu,\nu\in E^*}\|\lambda_{\mu,\nu}s_{\mu}s_{\nu}^*\|}\sum_{\mu,\nu\in E^*}\|\lambda_{\mu,\nu}s_{\mu}s_{\nu}^*\|=\frac{\varepsilon}{3}$$
and hence

$$\|\sigma_t^c(a)-\sigma_u^c(a)\|=\|\sigma_t^c(a)-\sigma_t^c(x)+\sigma_t^c(x)-\sigma_u^c(x)+\sigma_u^c(x)-\sigma_u^c(a)\|\leq$$
$$\leq\|\sigma_t^c(a-x)\|+\|\sigma_t^c(x)-\sigma_u^c(x)\|+\|\sigma_u^c(x-a)\|\leq \varepsilon/3+\varepsilon/3+\varepsilon/3=\varepsilon.$$

\end{proof}

From now on, we will write simply $\sigma$ instead of $\sigma_c$. The next result shows that KMS states on $C^*(E)$ are determined by their values at the core algebra.

\begin{proposition}\label{prop_unicity_kms_core}
Suppose $c:E^1\to\mathbb{R}^*_+$ is such that $c(\mu)\neq 1$ for all $\mu\in E^*\backslash E^0$. If two $(\sigma,\beta)$-KMS states $\varphi_1,\varphi_2$ on $C^*(E)$ coincide at the core algebra $C^*(E)^{\gamma}$, then $\varphi_1=\varphi_2$.
\end{proposition}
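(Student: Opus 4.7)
The plan is to show that for any $(\sigma,\beta)$-KMS state $\varphi$ on $C^*(E)$, one has $\varphi(s_\mu s_\nu^*)=0$ whenever $|\mu|\neq|\nu|$. Once this is in hand, the elements $s_\mu s_\nu^*$ with $|\mu|=|\nu|$ lie in the core $C^*(E)^{\gamma}$ by Proposition~\ref{prop_core_description}, so $\varphi_1$ and $\varphi_2$ agree on them by hypothesis, while the remaining spanning elements all vanish under both states; by density via Proposition~\ref{prop_multiplication} and linearity, $\varphi_1=\varphi_2$.

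The preliminary reduction is the following vanishing: for every path $\lambda \in E^*$ with $|\lambda|\geq 1$, $\varphi(s_\lambda)=0$. Since any $(\sigma,\beta)$-KMS state is automatically $\sigma$-invariant, one has
$$c(\lambda)^{it}\varphi(s_\lambda)=\varphi(\sigma_t(s_\lambda))=\varphi(s_\lambda)$$
for every $t\in\mathbb{R}$; the hypothesis $c(\lambda)\neq 1$ makes the character $t\mapsto c(\lambda)^{it}$ non-constant on $\mathbb{R}$, forcing $\varphi(s_\lambda)=0$.

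For the mixed-length case, I would assume without loss of generality that $|\mu|>|\nu|$ (the case $|\mu|<|\nu|$ follows from the other by applying $\varphi(a^*)=\overline{\varphi(a)}$ to $s_\nu s_\mu^*$). Since $s_\mu$ is entire analytic for $\sigma$ with $\sigma_{i\beta}(s_\mu)=c(\mu)^{-\beta}s_\mu$, the KMS condition gives
$$\varphi(s_\mu s_\nu^*)=c(\mu)^{-\beta}\varphi(s_\nu^* s_\mu).$$
A direct computation iterating the relation $s_{e}^*s_{f}=\delta_{e,f}\,p_{s(e)}$ along the edges of $\nu$ and $\mu$ shows that $s_\nu^* s_\mu$ is either $0$ (when $\nu$ is not an initial subpath of $\mu$) or equal to $s_{\mu'}$, where $\mu'$ is the path determined by $\mu=\nu\mu'$, which satisfies $|\mu'|=|\mu|-|\nu|\geq 1$. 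In either case the preliminary step yields $\varphi(s_\nu^* s_\mu)=0$, and therefore $\varphi(s_\mu s_\nu^*)=0$.

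The main technical point is the preliminary vanishing $\varphi(s_\lambda)=0$ for $|\lambda|\geq 1$; this is precisely where the hypothesis $c(\mu)\neq 1$ for nontrivial paths enters decisively, and once it is established the remainder of the argument is a routine manipulation of the KMS condition together with the basic computations in $C^*(E)$ recorded in Proposition~\ref{prop_multiplication}.
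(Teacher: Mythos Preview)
Your proof is correct and follows the same strategy as the paper: use the KMS condition together with the multiplication formula of Proposition~\ref{prop_multiplication} to reduce the mixed-length case to the vanishing $\varphi(s_\lambda)=0$ for $|\lambda|\geq 1$, and then invoke the hypothesis $c(\lambda)\neq 1$. The only variation is that you deduce this vanishing from the $\sigma$-invariance of KMS states, whereas the paper applies the KMS identity with a unit (or an approximate unit in the non-unital case) to obtain $\varphi(s_\lambda)=c(\lambda)^{-\beta}\varphi(s_\lambda)$; your route has the mild advantage of avoiding the unital/non-unital case split.
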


\begin{proof}
Taking an arbitrary $s_{\mu} s_{\nu}^*$ such that $s(\mu)=s(\nu)$, if $|\mu|=|\nu|$ then $s_{\mu} s_{\nu}^*\in C^*(E)^{\gamma}$ and thus $\varphi_1(s_{\mu} s_{\nu}^*)=\varphi_2(s_{\mu} s_{\nu}^*)$.

Suppose then that $|\mu|\neq |\nu|$, and denote the functional $\varphi_2-\varphi_1$ by $\varphi$. Using the KMS condition, one obtains
$$\varphi(s_{\mu} s_{\nu}^*)=\varphi(s_{\nu}^*c(\mu)^{-\beta}s_{\mu}) = \left\{ 
\begin{array}{cc}
c(\mu)^{-\beta}\varphi(s_{\nu'}^*) & \mathrm{if}\ \nu=\mu\nu'\\
c(\mu)^{-\beta}\varphi(s_{\mu'}^*) & \mathrm{if}\ \mu=\nu\mu'\\
0 & \mathrm{otherwise}
\end{array}
\right. .$$
It is therefore sufficient to show that $\varphi(s_{\mu})=\varphi(s_{\mu}^*)=0$ if $|\mu|\geq 1$. To see this, notice that if $C^*(E)$ has a unit, then $$\varphi(s_{\mu})=\varphi(s_{\mu}1)=\varphi(1c(\mu)^{-\beta}s_{\mu})=c(\mu)^{-\beta}\varphi(s_{\mu}),$$ whence $\varphi(s_{\mu})=0$ since $c(\mu)\neq 1$ by hypothesis; the non-unital case is established analogously with the use of an approximate unit.
\end{proof}

\begin{theorem}\label{thm:KMSparte1}
Suppose $c:E^1\to\mathbb{R}^*_+$ is such that $c(\mu)\neq 1$ for all $\mu\in E^*\backslash E^0$. If $\varphi$ is a $(\sigma,\beta)$-KMS state on $C^*(E)$ then its restriction $\omega=\varphi|_{C^*(E)^{\gamma}}$ to $C^*(E)^{\gamma}$ satisfies
\begin{equation}\label{eq:CoreCondition}
\omega(s_{\mu}s_{\nu}^*)=[\mu=\nu]c(\mu)^{-\beta}\omega(p_{s(\mu)});
\end{equation}
conversely, if $\omega$ is a state on $C^*(E)^{\gamma}$ satisfying \emph{(\ref{eq:CoreCondition})} then $\varphi=\omega\circ\Phi$ is a $(\sigma,\beta)$-KMS state on $C^*(E)$, where $\Phi$ is the conditional expectation from proposition \ref{prop_conditional_expectation}. The correspondence thus obtained is bijective and preserves convex combinations.
\end{theorem}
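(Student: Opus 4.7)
The plan is to split into the forward direction (a KMS state restricts to something satisfying (\ref{eq:CoreCondition})), the converse construction $\omega\mapsto\omega\circ\Phi$, and the bijectivity/convexity claim, deferring the uniqueness input to Proposition \ref{prop_unicity_kms_core}.

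For the forward direction, fix $s_\mu s_\nu^*\in C^*(E)^\gamma$, so $|\mu|=|\nu|$. Each $s_\mu$ is entire analytic for $\sigma$ with $\sigma_z(s_\mu)=c(\mu)^{iz}s_\mu$, so $\sigma_{i\beta}(s_\mu)=c(\mu)^{-\beta}s_\mu$. Applying the KMS condition with $a=s_\mu$, $b=s_\nu^*$,
$$\omega(s_\mu s_\nu^*)=\varphi(s_\nu^*\sigma_{i\beta}(s_\mu))=c(\mu)^{-\beta}\varphi(s_\nu^* s_\mu).$$
A short induction on the length $|\mu|=|\nu|$, peeling off initial edges and using that $s_e^* s_f=0$ whenever $e\neq f$ (which follows from mutual orthogonality of the range projections $s_e s_e^*$), then gives $s_\nu^* s_\mu=[\mu=\nu]\,p_{s(\mu)}$, which is (\ref{eq:CoreCondition}).

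For the converse, set $\varphi=\omega\circ\Phi$. Positivity is immediate from positivity of $\omega$ and $\Phi$, and since $\Phi$ restricts to the identity on $C^*(E)^\gamma$,
$$1=\|\omega\|=\|\varphi|_{C^*(E)^\gamma}\|\leq\|\varphi\|\leq\|\omega\|\,\|\Phi\|=1,$$
so $\varphi$ is a state. To verify the KMS condition it suffices, by density and $\sigma$-invariance of the $*$-algebra $\mathrm{span}\{s_\mu s_\nu^*\}\subseteq C^*(E)$ inside the entire analytic elements, to check
$$\varphi\bigl((s_\mu s_\nu^*)(s_\gamma s_\delta^*)\bigr)=c(\mu)^{-\beta}c(\nu)^{\beta}\,\varphi\bigl((s_\gamma s_\delta^*)(s_\mu s_\nu^*)\bigr)$$
on these generators. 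The strategy is to expand both products by Proposition \ref{prop_multiplication}, apply $\Phi$ (which kills any summand whose source and range paths have different lengths), and evaluate using (\ref{eq:CoreCondition}). Tracking the cases, both sides vanish unless $\gamma=\nu\alpha$ and $\delta=\mu\alpha$ for some path $\alpha$ (or in the symmetric configuration obtained by swapping $a$ and $b$); in that case both sides reduce to a scalar multiple of $\omega(p_{s(\alpha)})$, and multiplicativity of $c$ along paths yields $c(\mu\alpha)^{-\beta}/c(\nu\alpha)^{-\beta}=c(\mu)^{-\beta}c(\nu)^{\beta}$, which is precisely the required KMS factor.

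Bijectivity then follows at once: $(\omega\circ\Phi)|_{C^*(E)^\gamma}=\omega$ since $\Phi$ fixes the core, while conversely, for a KMS state $\varphi$, both $\varphi$ and $(\varphi|_{C^*(E)^\gamma})\circ\Phi$ are KMS states agreeing on the core and are therefore equal by Proposition \ref{prop_unicity_kms_core}. Both correspondences are affine, so convex combinations are preserved. The main technical obstacle I anticipate is the bookkeeping in the KMS verification: one must confirm that whenever one of $\Phi(ab)$, $\Phi(ba)$ is nonzero, so is the other, with values related by exactly $c(\mu)^{-\beta}c(\nu)^{\beta}$, and that both sides vanish together in all remaining configurations.
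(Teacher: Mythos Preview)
Your proposal is correct and follows essentially the same route as the paper: apply the KMS identity to $s_\mu s_\nu^*$ to obtain (\ref{eq:CoreCondition}), then verify the KMS condition for $\omega\circ\Phi$ on pairs of spanning elements by expanding both products via Proposition~\ref{prop_multiplication}, applying $\Phi$, and using (\ref{eq:CoreCondition}); bijectivity is deduced from Proposition~\ref{prop_unicity_kms_core}. The only difference is presentational: the paper organises the KMS check as a $3\times 3$ case analysis (your ``bookkeeping'' obstacle) and works three representative cases explicitly, whereas you summarise the outcome by saying both sides vanish unless $\gamma=\nu\alpha$, $\delta=\mu\alpha$ (or the swapped configuration) and then compare the surviving scalars---this is exactly what the paper's case analysis establishes.
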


\begin{proof}
Let $\varphi$ be a $(\sigma,\beta)$-KMS state on $C^*(E)$ and $\omega$ its restriction to $C^*(E)^{\gamma}$. If $\mu,\nu$ are paths such that $|\mu|=|\nu|$ and $s(\mu)=s(\nu)$ then
$$\omega(s_{\mu}s_{\nu}^*)=\varphi(s_{\mu}s_{\nu}^*)=\varphi(s_{\nu}^*\sigma_{i\beta}(s_{\mu}))=\varphi(s_{\nu}^*c(\mu)^{-\beta}s_{\mu})=$$
$$=[\mu=\nu]c(\mu)^{-\beta}\varphi(p_{s(\mu)})=[\mu=\nu]c(\mu)^{-\beta}\omega(p_{s(\mu)}).$$

Conversely, let $\omega$ be a state on $C^*(E)^{\gamma}$ satisfying (\ref{eq:CoreCondition}) and $\varphi=\omega\circ\Phi$; we have to show that $\varphi$ satisfies the KMS condition. By continuity and linearity, it is sufficient to verify this for elements $x=s_{\mu}s_{\nu}^*$ and $y=s_{\zeta}s_{\eta}^*$ where $\mu,\nu,\zeta,\eta\in E^*$ are paths such that $s(\mu)=s(\nu)$ and $s(\zeta)=s(\eta)$.

We need to check that $\varphi(xy)=\varphi(y\sigma_{i\beta}(x))$. First note that

$$xy=(s_{\mu} s_{\nu}^*)(s_{\zeta}s_{\eta}^*) = \left\{ 
\begin{array}{ccc}
s_{\mu\zeta'}s_{\eta}^* & \mathrm{if}\ \zeta=\nu\zeta' & (1) \\
s_{\mu}s_{\eta\nu'}^* & \mathrm{if}\ \nu=\zeta\nu' & (2) \\
0 & \mathrm{otherwise} & (3)
\end{array}
\right. $$
and
$$y\sigma_{i\beta}(x)=c(\mu)^{-\beta}c(\nu)^{\beta}(s_{\zeta} s_{\eta}^*)(s_{\mu}s_{\nu}^*) = c(\mu)^{-\beta}c(\nu)^{\beta}\left\{ 
\begin{array}{ccc}
s_{\zeta\mu'}s_{\nu}^* & \mathrm{if}\ \mu=\eta\mu' & (a)\\
s_{\zeta}s_{\nu\eta'}^* & \mathrm{if}\ \eta=\mu\eta' & (b)\\
0 & \mathrm{otherwise} & (c)
\end{array}
\right. .$$

There are nine cases to consider. In each case it must be checked whether the resulting paths have the same size, for they will be otherwise sent to $0$ by $\Phi$.

Case 1-a. In this case $\zeta=\nu\zeta'$ and $\mu=\eta\mu'$ so that $|\zeta|=|\nu|+|\zeta'|$ and $|\mu|=|\eta|+|\mu'|$. We claim that $|\mu\zeta'|=|\mu|+|\zeta'|=|\eta|$ if and only if $|\zeta\mu'|=|\zeta|+|\mu'|=|\nu|$, and in this case $\mu=\eta$ and $\nu=\zeta$. In fact,
$$|\mu|+|\zeta'|=|\eta| \Leftrightarrow |\eta|+|\mu'|+|\zeta'|=|\eta| \Leftrightarrow |\mu'|+|\zeta'|=0 \Leftrightarrow$$
$$\Leftrightarrow |\nu|+|\zeta'|+|\mu'|=|\nu| \Leftrightarrow |\zeta|+|\mu'|=|\nu|.$$
Observe that, in this case, we have $|\mu'|+|\zeta'|=0$ so that $|\mu'|=|\zeta'|=0$, and hence $\mu=\eta$, $\nu=\zeta$.

It follows that, if $|\mu\zeta'|\neq|\eta|$, then $$\varphi(xy)=\omega\circ\Phi(xy)=\omega(0)=\omega\circ\Phi(y\sigma_{i\beta}(x))=\varphi(y\sigma_{i\beta}(x))$$
and, if $|\mu\zeta'|=|\eta|$, we get
$$\varphi(xy)=\varphi(s_{\mu}s_{\mu}^*)=\omega(s_{\mu}s_{\mu}^*)=c(\mu)^{-\beta}\omega(p_{s(\mu)})$$
and on the other hand
$$\varphi(y\sigma_{i\beta}(x))=c(\mu)^{-\beta}c(\nu)^{\beta}\varphi(s_{\nu}s_{\nu}^*)=c(\mu)^{-\beta}c(\nu)^{\beta}\omega(s_{\nu}s_{\nu}^*)=$$
$$=c(\mu)^{-\beta}c(\nu)^{\beta}c(\nu)^{-\beta}\omega(p_{s(\mu)})=c(\mu)^{-\beta}\omega(p_{s(\mu)}).$$

Case 1-b. Now, we have that $\zeta=\nu\zeta'$ and $\eta=\mu\eta'$ so that $|\zeta|=|\nu\zeta'|=|\nu|+|\zeta'|$ and $|\eta|=|\mu\eta'|=|\mu|+|\eta'|$; as before, we can check that $|\mu|+|\zeta'|=|\eta|$ if and only if $|\zeta|=|\nu|+|\eta'|$. If that is not the case then $\varphi(xy)=0=\varphi(y\sigma_{i\beta}(x))$. If the equivalent conditions are true then
$$\varphi(xy)=\varphi(s_{\mu\zeta'}s_{\eta}^*)=\omega(s_{\mu\zeta'}s_{\eta}^*)=[\mu\zeta'=\eta]c(\eta)^{-\beta}\omega(p_{s(\eta)})$$
and
$$\varphi(y\sigma_{i\beta}(x))=c(\mu)^{-\beta}c(\nu)^{\beta}\varphi(s_{\zeta}s_{\nu\eta'}^*)=c(\mu)^{-\beta}c(\nu)^{\beta}[\zeta=\nu\eta']c(\zeta)^{-\beta}\omega(p_{s(\zeta)}).$$
Since $\zeta=\nu\zeta'$ and $\eta=\mu\eta'$, we have that $\mu\zeta'=\eta$ if and only if $\zeta=\nu\eta'$ and if both are true, then $\zeta'=\eta'$ and 
$$c(\mu)^{\beta}c(\nu)^{-\beta}c(\zeta)^{-\beta}=c(\mu)^{-\beta}c(\nu)^{\beta}c(\nu)^{-\beta}c(\eta')^{-\beta}=c(\mu)^{-\beta}c(\eta')^{-\beta}=$$
$$=c(\mu)^{-\beta}c(\zeta')^{-\beta}=c(\eta)^{-\beta}.$$
From our original hypothesis, we have that $s(\eta)=s(\zeta)$ so we conclude that $\varphi(xy)=\varphi(y\sigma_{i\beta}(x))$.

Case 1-c. In this case $\varphi(y\sigma_{i\beta}(x))=0$, so we need to check that $\varphi(xy)=0$. As with the previous case, we have that $\varphi(xy)=[\mu\zeta'=\eta]c(\eta)^{-\beta}\omega(p_{s(\eta)})$; however, in case (c) $\mu\zeta'\neq\eta$ for all $\zeta'$ and therefore $\varphi(xy)=0$.

The other cases are analogous to these three, except for case 3-c, where $\varphi(xy)=0=\varphi(y\sigma_{i\beta}(x))$ since $xy=0=y\sigma_{i\beta}(x)$.

That the correspondence obtained is bijective follows from Proposition \ref{prop_unicity_kms_core} and that it preserves convex combinations is immediate.
\end{proof}

Next, we want to show that there is also a bijective correspondence between $(\sigma,\beta)$-KMS states on $C^*(E)$ and a certain class of tracial states on $C_0(E^0)$. We build this correspondence by first describing a correspondence between this class of tracial states on $C_0(E^0)$ and states $\omega$ on $C^*(E)^{\gamma}$ satisfying (\ref{eq:CoreCondition}).

The conditions found for the states on $C_0(E^0)$ are similar to those in \cite{MR2056837}, although as discussed in \cite{2010arXiv1007.4248K}, their results cannot be used directly for an arbitrary graph; nevertheless, the results of Theorem 1.1 of \cite{MR2056837} still apply in the general setting, and we use them to build a certain kind of transfer operator on the dual of $C_0(E^0)$.

Let us first recall how to construct $C^*(E)$ as C*-algebra associated to a C*-correspondence \cite{MR2029622}. If we let $A=C_0(E^0)$, then $C_c(E^1)$ has a pre-Hilbert A-module structure given by
$$\left<\xi,\eta\right>(v)=\sum_{e\in s^{-1}(v)}\overline{\xi(e)}\eta(e)\quad\mathrm{for}\quad v\in E^0,$$
$$(\xi a)(e)=\xi(e)a(s(e))\quad\mathrm{for}\quad e\in E^1,$$
where $\xi,\eta\in C_c(E^1)$ and $a\in A$; it follows that the completion $X$ of $C_c(E^1)$ with respect to the norm given by $\|\xi\|=\|\left<\xi,\xi\right>\|^{1/2}$ is a Hilbert A-module. A representation $i_X:A\to\mathcal{L}(X)$ is then defined by by
$$i_X(a)(\xi)(e)=a(r(e))\xi(e)\quad\mathrm{for}\quad v\in E^0,$$
where $\mathcal{L}(X)$ is the C*-algebra of adjointable operators on $X$.

Let $\mathcal{K}(X)$ be the C*-subalgebra of $\mathcal{L}(X)$ generated by the operators $\theta_{\xi,\eta}$ given by $\theta_{\xi,\eta}(\zeta)=\xi\left<\eta,\zeta\right>$. For each $e\in E^1$, let $\chi_e\in C_c(E^1)$ be the characteristic function of $\{e\}$ and observe that 
$$\left\{t_{\lambda}=\sum_{e\in\lambda}\theta_{\chi_e,\chi_e}\right\}_{\lambda\in\Lambda},$$
where $\Lambda$ is the set of all finite subsets of $E^1$, is an approximate unit of $\mathcal{K}(X)$ . It is essentially the same approximate unit given by Lemma \ref{lemma_approxunit}.

If $\tau$ is a tracial state on $C_0(E^0)$, as in Theorem 1.1 of \cite{MR2056837} we define a trace $\mathrm{Tr}_{\tau}$ on $\mathcal{L}(X)$ by
$$\mathrm{Tr}_{\tau}(T)=\lim_{\lambda\to\infty}\sum_{e\in\lambda}\tau\left(\left<\chi_e,T\chi_e\right>\right)$$
where $T\in\mathcal{L}(X)$.

For a function $c:E^1\to\mathbb{R}^*_+$ as in the beginning of the section and $\beta>0$, we have that $c^{-\beta}\in C_b(E^1)$ and so it defines an operator on $\mathcal{L}(X)$ by pointwise multiplication.

\begin{definition}
Given $c$ and $\beta$ as above and $\tau$ a tracial state on $C_0(E^0)$, we define a trace $\mathcal{F}_{c,\beta}(\tau)$ on $C_0(E^0)$ by
$$\mathcal{F}_{c,\beta}(\tau)(a)=\mathrm{Tr}_{\tau}(i_X(a)c^{-\beta}).$$
\end{definition}

Now, observe that $C_0(E^0)\cong\overline{\mathrm{span}}\{p_v\}_{v\in E^0}$; regarding this as an equality, for a given tracial state $\tau$ on $C_0(E^0)$ we will write $\tau(p_v)=\tau_v$. For $v\in E^0$, it can be verified that
$$\mathcal{F}_{c,\beta}(\tau)(p_v)=\lim_{D\to r^{-1}(v)}\sum_{e\in D} c(e)^{-\beta}\tau_{s(e)},$$
where the limit is taken on finite subsets $D$ of $r^{-1}(v)$, and $\mathcal{F}_{c,\beta}(\tau)(p_v)=0$ if $r^{-1}(v)=\emptyset$.

\begin{remark}\label{remark_F}
By Theorem 1.1 of \cite{MR2056837}, if $\mathcal{F}_{c,\beta}(\tau)(a)<\infty$ for all $a\in C_0(E^0)$, then $\mathcal{F}_{c,\beta}(\tau)$ is actually a positive linear functional; also, if $V\subseteq E^0$ and $\mathcal{F}_{c,\beta}(\tau)(p_v)<\infty$ for all $v\in V$ then $\mathcal{F}_{c,\beta}(\tau)$ is a positive linear functional on $\overline{\mathrm{span}}\{p_v:v\in V\}$.
\end{remark}

\begin{definition}
For a vertex $v\in E^0$ and a positive integer $n$, we define
$$r^{- n}(v)=\{\mu\in E^{n}:r(\mu)=v\}.$$
\end{definition}

\begin{lemma}\label{lemma_inequalityF}
If $\mathcal{F}_{c,\beta}(\tau)(p_v)\leq \tau_v$ for all $v\in E^0$ then
$$\lim_{D\to r^{-n}(v)}\sum_{\mu\in D} c(\mu)^{-\beta}\tau_{s(\mu)}\leq \tau_v$$
for all $v\in E^0$ and for all $n\in\mathbb{N}^*$.
\end{lemma}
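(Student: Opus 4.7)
I would prove this by induction on $n$.

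For the base case $n=1$, the statement is precisely the hypothesis, since by the formula immediately preceding Remark \ref{remark_F} we have
$$\mathcal{F}_{c,\beta}(\tau)(p_v)=\lim_{D\to r^{-1}(v)}\sum_{e\in D}c(e)^{-\beta}\tau_{s(e)}\leq\tau_v.$$

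For the inductive step, assume the inequality holds for $n$ and let $v\in E^0$. Because every term in the sum is nonnegative, the limit over the upward-directed net of finite subsets is a supremum, so it suffices to bound $\sum_{\mu\in D}c(\mu)^{-\beta}\tau_{s(\mu)}$ uniformly in finite $D\subseteq r^{-(n+1)}(v)$. Any $\mu\in r^{-(n+1)}(v)$ decomposes uniquely as $\mu=e\mu'$ with $e=\mu_1\in r^{-1}(v)$ and $\mu'=\mu_2\cdots\mu_{n+1}\in r^{-n}(s(e))$, and then $c(\mu)^{-\beta}=c(e)^{-\beta}c(\mu')^{-\beta}$ and $s(\mu)=s(\mu')$. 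Given a finite $D$, let $E_D=\{\mu_1:\mu\in D\}\subseteq r^{-1}(v)$ and for each $e\in E_D$ let $D_e=\{\mu'\in r^{-n}(s(e)):e\mu'\in D\}$. Grouping the sum yields
$$\sum_{\mu\in D}c(\mu)^{-\beta}\tau_{s(\mu)}=\sum_{e\in E_D}c(e)^{-\beta}\sum_{\mu'\in D_e}c(\mu')^{-\beta}\tau_{s(\mu')}.$$

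Each inner sum is bounded above by the corresponding limit over $r^{-n}(s(e))$, which by the induction hypothesis is at most $\tau_{s(e)}$. Therefore
$$\sum_{\mu\in D}c(\mu)^{-\beta}\tau_{s(\mu)}\leq\sum_{e\in E_D}c(e)^{-\beta}\tau_{s(e)}\leq\mathcal{F}_{c,\beta}(\tau)(p_v)\leq\tau_v,$$
where the second inequality again uses that $E_D$ is a finite subset of $r^{-1}(v)$ and the defining formula for $\mathcal{F}_{c,\beta}(\tau)(p_v)$. Passing to the supremum over $D$ gives the desired inequality for $n+1$.

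The only slightly delicate point is the passage from the defining limits to uniform bounds on finite subsets; this is painless here because the positivity of $c^{-\beta}$ and $\tau$ forces the nets to be monotone, so limits and suprema agree. The decomposition of paths and the grouping of sums is then routine, and no further use of the $\mathcal{F}_{c,\beta}$ construction beyond its explicit evaluation on the $p_v$ is required.
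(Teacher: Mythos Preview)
Your proof is correct and follows essentially the same inductive approach as the paper's; the only cosmetic difference is that you peel off the first edge $e\in r^{-1}(v)$ and apply the induction hypothesis to the length-$n$ tail $\mu'\in r^{-n}(s(e))$, whereas the paper peels off the last edge $e\in r^{-1}(s(\nu))$ and applies the induction hypothesis to the length-$n$ initial segment $\nu\in r^{-n}(v)$. Both orderings work for exactly the reason you identify---positivity makes the nets monotone, so finite partial sums are bounded by the limits---and the two arguments are interchangeable.
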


\begin{proof}
This is proved by induction. The case $n=1$ is the hypothesis. Now suppose it is true for $n$, then
$$\lim_{D\to r^{-(n+1)}(v)}\sum_{\mu\in D} c(\mu)^{-\beta}\tau_{s(\mu)}=\lim_{D\to r^{-n}(v)}\sum_{\nu\in r^{\leq n}}c(\nu)^{-\beta}\sum_{e\in r^{-1}(s(\nu))}c(e)^{-\beta}\tau_{s(e)} [\nu e\in D]\leq$$
$$\leq \lim_{D\to r^{-n}(v)}\sum_{\nu\in D} c(\nu)^{-\beta}\tau_{s(\nu)}\leq \tau_v$$
where the first inequality is true due to the fact that since $c$ is a positive function then the net $\sum_{e\in D}c(e)^{-\beta}\tau_{s(e)}$ for finite subsets $D$ of $r^{-1}(s(\nu))$ is nondecreasing and less than or equal to $\tau_{s(\nu)}$ by hypothesis. The last inequality is the induction hypothesis.
\end{proof}

The next lemma is found in \cite{MR1953065} for unital algebras, but their proof carries out the same in the non-unital case by using an approximate unit instead of a unit.

\begin{lemma}[Exel-Laca]\label{lemma_exel_laca}
Let $B$ be a C*-algebra, $A$ be a C*-subalgebra such that an approximate unit of $A$ is also an approximate unit of $B$ and $I$ a closed bilateral ideal of $B$ such that $B=A+I$. Let $\varphi$ be a state on $A$ and $\psi$ a linear positive functional on $I$ such that $\varphi(x)=\psi(x)$ $\forall x\in A\cap I$ and $\overline{\psi}(x)\leq\varphi(x)$ $\forall x\in A^+$, where $\overline{\psi}(x)=\lim_{\lambda}\psi(bu_{\lambda})$ for an approximate unit $\{u_\lambda\}_{\lambda\in\Lambda}$ of $I$. Then there is a unique state $\Phi$ on $B$ such that $\Phi|_A=\varphi$ and $\Phi|_I=\psi$.
\end{lemma}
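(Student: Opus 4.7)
The plan is the standard Hahn-Banach style extension argument: first extend $\psi$ to a positive functional on all of $B$, then compensate for the overlap by extending a positive functional on $A$ that vanishes on $A\cap I$, and add the two pieces. Uniqueness will come for free from $B=A+I$.

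First I would define $\overline{\psi}:B\to\mathbb{C}$ by $\overline{\psi}(b)=\lim_{\lambda}\psi(bu_{\lambda})$ for an approximate unit $\{u_{\lambda}\}$ of $I$, and verify this is a positive linear functional extending $\psi$. Existence of the limit is the main analytic point: for $b\in B^{+}$, the net $\psi(u_{\lambda}bu_{\lambda})$ is increasing and bounded above (one can estimate $\psi(u_\lambda b u_\lambda)\leq\|b\|\psi(u_\lambda^2)\leq\|b\|\|\psi\|$), and a Cauchy-Schwarz argument using positivity of $\psi$ shows that $\psi(bu_{\lambda})-\psi(u_{\lambda}bu_{\lambda})\to 0$, so the original net converges. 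Independence of the choice of approximate unit, linearity, positivity, and the property $\overline{\psi}|_{I}=\psi$ (for $x\in I$, $xu_\lambda\to x$) then follow routinely.

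Next, consider $\theta:=\varphi-\overline{\psi}|_{A}$ on $A$. By hypothesis $\overline{\psi}(x)\leq\varphi(x)$ on $A^{+}$, so $\theta$ is a positive linear functional on $A$. Moreover $\theta$ vanishes on $A\cap I$: for $x\in A\cap I$ we have $\overline{\psi}(x)=\psi(x)=\varphi(x)$. Since $B=A+I$, the canonical map $A/(A\cap I)\to B/I$ is a $*$-isomorphism of C*-algebras, so $\theta$ descends to a positive functional on $A/(A\cap I)\cong B/I$, which pulls back along the quotient $\pi:B\to B/I$ to a positive functional $\widetilde{\theta}$ on $B$ satisfying $\widetilde{\theta}|_{A}=\theta$ and $\widetilde{\theta}|_{I}=0$. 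Concretely, $\widetilde{\theta}(a+i):=\theta(a)$, which is well-defined precisely because $\theta$ kills $A\cap I$.

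Finally I would put $\Phi:=\widetilde{\theta}+\overline{\psi}$. Then $\Phi|_{A}=(\varphi-\overline{\psi}|_{A})+\overline{\psi}|_{A}=\varphi$ and $\Phi|_{I}=0+\psi=\psi$. Positivity is clear, and to check that $\Phi$ is a state I would evaluate on an approximate unit $\{e_{\mu}\}$ of $A$, which by hypothesis is also an approximate unit of $B$: since $e_{\mu}\in A$, $\Phi(e_{\mu})=\varphi(e_{\mu})\to 1$, so $\|\Phi\|=1$. Uniqueness is immediate: any state on $B$ agreeing with $\varphi$ on $A$ and with $\psi$ on $I$ is determined on $A+I=B$ by linearity. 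The only step that requires real work is the construction and positivity of $\overline{\psi}$, in particular convergence of the defining net; everything else is bookkeeping.
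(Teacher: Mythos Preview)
The paper does not actually prove this lemma; it simply cites Exel--Laca and remarks that their unital argument goes through verbatim once one replaces the unit by an approximate unit. Your sketch is precisely that argument: extend $\psi$ to a positive functional $\overline{\psi}$ on all of $B$, observe that $\varphi-\overline{\psi}|_A$ is positive on $A$ and vanishes on $A\cap I$, push it through the isomorphism $A/(A\cap I)\cong B/I$, and add the pieces. So in outline you are exactly reproducing the intended proof.

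One small technical point in your convergence step deserves care. You assert that $\psi(u_\lambda b u_\lambda)$ is increasing in $\lambda$, but $t\mapsto t^2$ is not operator monotone, so from $u_\lambda\le u_\mu$ one cannot directly conclude $u_\lambda b u_\lambda\le u_\mu b u_\mu$. The cleanest way to handle the existence of $\overline{\psi}$ is to avoid this monotonicity claim entirely: use that a positive linear functional on a closed ideal $I\trianglelefteq B$ has a unique norm-preserving positive extension to $B$ (equivalently, extend the GNS representation $(\pi_\psi,H_\psi,\xi_\psi)$ of $I$ to $B$ via the ideal property and set $\overline{\psi}(b)=\langle\pi_\psi(b)\xi_\psi,\xi_\psi\rangle$; then $\pi_\psi(u_\lambda)\xi_\psi\to\xi_\psi$ gives both $\lim_\lambda\psi(bu_\lambda)$ and $\lim_\lambda\psi(u_\lambda b u_\lambda)$ equal to $\overline{\psi}(b)$). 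With that in place, the rest of your argument---the quotient identification, positivity of $\theta$, and the norm computation via an approximate unit of $A$---is correct as written.
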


We want to use this lemma for $A=C_n$, $I=F_{n+1}$ and $B=C_{n+1}$, defined in section \ref{section_graph_algebra}. For that, we first note that $F_{n+1}$ is indeed an ideal of $C_{n+1}$ by Proposition \ref{prop_Ck_description} and that the approximate unit for $F_0$ given by Lemma \ref{lemma_approxunit} is also an approximate unit of $C_n$ for all $n$. We also need to know what the intersection $A\cap I$ is, and for that we need a preliminary result.

\begin{lemma}\label{lemma_funcionalFk}
Suppose $c$, $\beta$ and $\tau$ are such that $\mathcal{F}_{c,\beta}(\tau)(a)\leq\tau(a)$ for all $a\in C_0(E^0)^+$, then for each $k\geq 1$ there is a unique positive linear functional $\psi_k$ on $F_k$ defined by
\begin{equation}\label{eq:functionalFk}
\psi_k(s_{\mu}s_{\nu}^*)=[\mu=\nu]c(\mu)^{-\beta}\tau_{s(\mu)}.
\end{equation}
\end{lemma}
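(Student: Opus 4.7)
The plan is to define $\psi_k$ piece by piece using the C$^*$-algebraic direct sum decomposition $F_k = \bigoplus_{v \in E^0} F_k(v)$ from (\ref{eq:decompostionFk}). By Proposition \ref{prop_multiplication}, for a fixed $v$ the elements $s_\mu s_\nu^*$ with $\mu,\nu$ ranging over $Q_k(v) := \{\mu \in E^k : s(\mu) = v\}$ satisfy the matrix-unit relations, so $F_k(v)$ is canonically isomorphic to $\mathcal{K}(\ell^2(Q_k(v)))$ via $s_\mu s_\nu^* \mapsto |\delta_\mu\rangle\langle\delta_\nu|$. Under this identification, the prescription (\ref{eq:functionalFk}) corresponds to the weighted trace $T \mapsto \tau_v \mathrm{Tr}(M_v T)$, where $M_v = \mathrm{diag}(c(\mu)^{-\beta})_{\mu \in Q_k(v)}$ is a diagonal operator on $\ell^2(Q_k(v))$.

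To see that this yields a bounded positive functional on $F_k(v)$, I need $M_v$ to be trace class whenever $\tau_v > 0$. Summing the estimate of Lemma \ref{lemma_inequalityF} over all $w \in E^0$ gives $\sum_{\mu \in E^k} c(\mu)^{-\beta} \tau_{s(\mu)} \leq \sum_w \tau_w \leq 1$, and reindexing the sum by source vertex produces $\sum_{v \in E^0} \tau_v \sum_{\mu \in Q_k(v)} c(\mu)^{-\beta} \leq 1$. In particular, $\tau_v \mathrm{Tr}(M_v) < \infty$ for every $v$ with $\tau_v > 0$, so $\psi_k^v(T) := \tau_v \mathrm{Tr}(M_v T)$ is a bounded positive linear functional on $F_k(v)$ of norm at most $\tau_v \mathrm{Tr}(M_v)$; for $v$ with $\tau_v = 0$ I simply set $\psi_k^v \equiv 0$, which is still consistent with (\ref{eq:functionalFk}). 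The global functional $\psi_k := \bigoplus_v \psi_k^v$ is then well-defined and positive on $F_k$ with $\|\psi_k\| \leq 1$, and a direct computation on rank-one matrix units confirms it satisfies (\ref{eq:functionalFk}).

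Uniqueness is automatic: the span of the $s_\mu s_\nu^*$ with $|\mu|=|\nu|=k$ and $s(\mu)=s(\nu)$ is a dense $\ast$-subalgebra of $F_k$, so any positive (hence bounded) linear functional on $F_k$ is completely determined by its values on this spanning set. The main technical point is the boundedness of each $\psi_k^v$ when $Q_k(v)$ is infinite; this rests on $M_v$ being trace class, which is precisely where the standing hypothesis $\mathcal{F}_{c,\beta}(\tau) \leq \tau$ enters, via the reindexing of the iterated bound in Lemma \ref{lemma_inequalityF}.
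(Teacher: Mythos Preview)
Your argument is correct. Both you and the paper use the decomposition $F_k=\bigoplus_{v}F_k(v)$ together with Lemma~\ref{lemma_inequalityF}, but the packaging differs. The paper defines $\psi_k$ on the algebraic span of the $s_\mu s_\nu^*$ and then proves the continuity bound $|\psi_k(x)|\le\|x\|$ directly for finite sums, invoking Lemma~\ref{lemma_inequalityF} separately at each vertex appearing in the support of $x$. You instead identify $F_k(v)\cong\mathcal K(\ell^2(Q_k(v)))$ and realise $\psi_k^v$ as the density-operator functional $T\mapsto\tau_v\,\mathrm{Tr}(M_vT)$ with $M_v=\mathrm{diag}(c(\mu)^{-\beta})$; your use of Lemma~\ref{lemma_inequalityF} is global rather than per-vertex: summing its range-indexed estimates over all $w\in E^0$ and then reindexing the resulting bound $\sum_{\mu\in E^k}c(\mu)^{-\beta}\tau_{s(\mu)}\le 1$ by source yields $\sum_v\tau_v\,\mathrm{Tr}(M_v)\le 1$, which in one stroke shows that each $M_v$ with $\tau_v>0$ is trace class and that the family $(\psi_k^v)_v$ glues to a contractive functional on the $c_0$-direct sum. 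This reindexing is a genuine difference from the paper: since Lemma~\ref{lemma_inequalityF} constrains sums over paths with a common \emph{range} while the blocks $F_k(v)$ are indexed by common \emph{source}, the paper's direct vertexwise citation of the lemma is less transparent, whereas your detour through the total sum over $E^k$ bridges that mismatch cleanly.
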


\begin{proof}
Since $\{s_\mu s_\nu^*:\mu,\nu\in E^k,\ s(\mu)=s(\nu)\}$ is linearly independent, equation \ref{eq:functionalFk} defines a unique linear functional on $\mathrm{span}\{s_\mu s_\nu^*:\mu,\nu\in E^k,\ s(\mu)=s(\nu)\}$. To extend to the closure, it is sufficient to prove that $\psi_k$ is continuous.

If $x\in\mathrm{span}\{s_\mu s_\nu^*:\mu,\nu\in E^k,\ s(\mu)=s(\nu)\}$ then
$$x=\sum_{v\in V}\sum_{(\mu,\nu)\in G_v}a_{\mu,\nu}^v s_{\mu}s_{\nu}^*$$
where $V$ is a finite subset of $E^0$ and $G_v$ is a finite subset of $\{(\mu,\nu)\in E^n\times E^n:s(\mu)=s(\nu)=v \}$. Using the decomposition given by equation \ref{eq:decompostionFk} and observing that $\{s_{\mu}s_{\nu}^*:(\mu,\nu)\in G_v\}$ can be completed to generators of a matrix algebra, we have that
$$\|x\|=\max_{v\in V}\left\|\sum_{(\mu,\nu)\in G_v}a_{\mu,\nu}^v s_{\mu}s_{\nu}^*\right\|=\max_{v\in V}\|(a_{\mu,\nu}^v)_{\mu,\nu}\|$$
where the last norm is the matrix norm.

If $\mathrm{Tr}$ is the usual matrix trace we have
$$|\psi_k(x)|=\left|\psi_k\left(\sum_{v\in V}\sum_{(\mu,\nu)\in G_v}a_{\mu,\nu}^v s_{\mu}s_{\nu}^* \right)\right|=$$
$$=\left|\sum_{v\in V}\sum_{(\mu,\nu)\in G_v}a_{\mu,\nu}^v [\mu=\nu]c(\mu)^{-\beta}\tau_{s(\mu)}\right|=$$
$$=\left|\sum_{v\in V}\mathrm{Tr}((a_{\mu,\nu}^v)_{\mu,\nu}\mathrm{diag}(c(\mu)^{-\beta}\tau_{s(\mu)})\right|\leq$$
$$\leq\sum_{v\in V}\left|\mathrm{Tr}((a_{\mu,\nu}^v)_{\mu,\nu}\mathrm{diag}(c(\mu)^{-\beta}\tau_{s(\mu)})\right|\leq$$
$$\leq\sum_{v\in V}\|(a_{\mu,\nu}^v)_{\mu,\nu}\|\sum_{\mu:(\mu,\mu)\in G_v}c(\mu)^{-\beta}\tau_{s(\mu)}\leq^{\mathrm{lemma\ \ref{lemma_inequalityF}}}$$
$$\leq\sum_{v\in V}\|(a_{\mu,\nu}^v)_{\mu,\nu}\|\tau_v\leq\max_{v\in V}(\|(a_{\mu,\nu}^v)_{\mu,\nu}\|)\sum_{v \in V}\tau _v=$$
$$= \|x\|\sum_{v \in V}\tau _v\leq \|x\|$$
where the last inequality comes from the fact that $\tau$ comes from a probability measure on a discrete space.
\end{proof}

\begin{theorem}\label{thm:KMSparte2}
If $\omega$ is a state on $C^*(E)^{\gamma}$ satisfying (\ref{eq:CoreCondition}) then its restriction $\tau$ to $C_0(E^0)$ satisfies:
\begin{itemize}
\item[(K1)] $\mathcal{F}_{c,\beta}(\tau)(a)=\tau(a)$ for all $a\in\overline{\mathrm{span}}\{p_v:0<|r^{-1}(v)|<\infty\}$,
\item[(K2)] $\mathcal{F}_{c,\beta}(\tau)(a)\leq\tau(a)$ for all $a\in C_0(E^0)^+$.
\end{itemize}
Conversely, if $\tau$ is a tracial state on $C_0(E^0)$ satisfying (K1) and (K2) then there is unique state $\omega$ on $C^*(E)^{\gamma}$ satisfying (\ref{eq:CoreCondition}). This correspondence preserves convex combinations.
\end{theorem}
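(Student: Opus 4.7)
The forward direction is obtained by restricting $\omega$ to $\tau := \omega|_{C_0(E^0)}$, which is tracial since $C_0(E^0)$ is commutative, and checking (K1) and (K2) via partial sums of $s_e s_e^*$. For any finite $D\subseteq r^{-1}(v)$, the projections $\{s_e s_e^*\}_{e\in D}$ are mutually orthogonal with $\sum_{e\in D}s_e s_e^*\leq p_v$, so (\ref{eq:CoreCondition}) gives $\sum_{e\in D}c(e)^{-\beta}\tau_{s(e)}\leq\tau_v$; passing to the limit yields (K2), and in the case $0<|r^{-1}(v)|<\infty$ the graph relation $p_v = \sum_{e\in r^{-1}(v)}s_e s_e^*$ upgrades this to an equality, which extends by linearity and continuity to give (K1).

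For the converse, my plan is to build $\omega$ as a direct limit of compatible states on the finite stages, using $C^*(E)^\gamma = \varinjlim C_k$. Set $\omega_0 := \tau$ on $C_0 = C_0(E^0)$. Assuming $\omega_k$ built on $C_k$, I would extend to $\omega_{k+1}$ on $C_{k+1}$ via the Exel--Laca Lemma \ref{lemma_exel_laca} applied with $A = C_k$, $I = F_{k+1}$, $B = C_{k+1}$, $\varphi = \omega_k$, and $\psi = \psi_{k+1}$ (the positive functional on $F_{k+1}$ built in Lemma \ref{lemma_funcionalFk}). The approximate unit of Lemma \ref{lemma_approxunit} for $F_0$ serves for all three algebras. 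The required agreement $\omega_k = \psi_{k+1}$ on $A\cap I = F_k\cap F_{k+1} = \bigoplus\{F_k(v):0<|r^{-1}(v)|<\infty\}$ (from Proposition \ref{prop_intersection} and Proposition \ref{prop_Ck_description}(b)) reduces by continuity to matrix units $s_\alpha s_\beta^*$ with common source $v$ of finite positive in-degree, where expanding $s_\alpha s_\beta^* = \sum_{e\in r^{-1}(v)}s_{\alpha e}s_{\beta e}^*$ and using (K1) shows both functionals evaluate to $[\alpha=\beta]c(\alpha)^{-\beta}\tau_v$.

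The main obstacle will be the remaining Exel--Laca hypothesis $\overline{\psi_{k+1}}(x)\leq\omega_k(x)$ for $x\in C_k^+$. For a matrix unit $s_\alpha s_\alpha^*$ with $|\alpha|=j\leq k$, Proposition \ref{prop_multiplication} shows that the only $\mu\in E^{k+1}$ contributing to $s_\alpha s_\alpha^* u_\lambda$ are those of the form $\mu = \alpha\mu''$ with $\mu''\in r^{-(k+1-j)}(s(\alpha))$, yielding
\[
\overline{\psi_{k+1}}(s_\alpha s_\alpha^*) = c(\alpha)^{-\beta}\lim_{D\to r^{-(k+1-j)}(s(\alpha))}\sum_{\mu''\in D}c(\mu'')^{-\beta}\tau_{s(\mu'')} \leq c(\alpha)^{-\beta}\tau_{s(\alpha)} = \omega_k(s_\alpha s_\alpha^*)
\]
by Lemma \ref{lemma_inequalityF}, whereas off-diagonal matrix units $s_\alpha s_\beta^*$ with $\alpha\neq\beta$ contribute zero on both sides. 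To promote the inequality from matrix units to arbitrary positive $x\in C_k$, I would exploit that on each C*-block $F_j(v)$ both $\omega_k$ and $\overline{\psi_{k+1}}$ act as diagonal traces annihilating off-diagonal matrix units, so $\omega_k-\overline{\psi_{k+1}}$ restricts there to a trace with non-negative diagonal weights; assembling these block-bounds via the vector space decomposition $C_k = \mathcal{E}_0\oplus\cdots\oplus\mathcal{E}_{k-1}\oplus F_k$ from Proposition \ref{prop_Ck_description}(c) would yield the full inequality on $C_k^+$. Uniqueness of $\omega$ then follows immediately since (\ref{eq:CoreCondition}) prescribes its values on the dense spanning set of $C^*(E)^\gamma$, and preservation of convex combinations is clear because (K1) is linear and (K2) a convex inequality.
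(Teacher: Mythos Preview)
Your proposal follows essentially the same route as the paper: restrict $\omega$ and verify (K1), (K2) on the projections $p_v$ via the relation $p_v=\sum s_e s_e^*$ and the inequality $\sum_{e\in D}s_e s_e^*\le p_v$; for the converse, build $\omega$ as an inductive limit by applying the Exel--Laca Lemma~\ref{lemma_exel_laca} with $A=C_k$, $I=F_{k+1}$, $B=C_{k+1}$, $\varphi=\omega_k$, $\psi=\psi_{k+1}$, checking agreement on $C_k\cap F_{k+1}$ via (K1) and the inequality $\overline{\psi_{k+1}}\le\omega_k$ on matrix units via Lemma~\ref{lemma_inequalityF}. The paper carries out exactly these computations, and your treatment of uniqueness and convex combinations also matches.

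One caveat on the step you flag as the ``main obstacle'': promoting $\overline{\psi_{k+1}}(s_\mu s_\mu^*)\le\omega_k(s_\mu s_\mu^*)$ to all of $C_k^+$ via the vector-space decomposition $C_k=\mathcal{E}_0\oplus\cdots\oplus\mathcal{E}_{k-1}\oplus F_k$ does not work directly, since that decomposition is not a $C^*$-direct sum and positive elements need not split into positive summands. The paper is equally terse here, simply verifying the inequality on the generators $s_\mu s_\nu^*$; the clean justification is that both $\omega_k$ and $\overline{\psi_{k+1}}$ annihilate off-diagonal matrix units and hence factor through the positive conditional expectation onto the commutative diagonal subalgebra $\overline{\mathrm{span}}\{s_\mu s_\mu^*:|\mu|\le k\}$, reducing the comparison to that abelian algebra where the pointwise inequality on the generating projections suffices.
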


\begin{proof}
Let $\omega$ be a state on $C^*(E)^{\gamma}$ satisfying (\ref{eq:CoreCondition}) and $\tau$ its restriction to $C_0(E^0)$. By Remark \ref{remark_F}, to establish (K1) it is sufficient to consider $a=p_v$ where $v\in E^0$ is such that $0<|r^{-1}(v)|<\infty$, and in this case

$$\tau(p_v)=\omega(p_v)=\omega\left(\sum_{e\in r^{-1}(v)}s_es_e^*\right)=\sum_{e\in r^{-1}(v)}c(e)^{-\beta}\omega(p_{s(e)})=$$
$$=\sum_{e\in r^{-1}(v)}c(e)^{-\beta}\tau_{s(e)}=\mathcal{F}_{c,\beta}(\tau)(p_v).$$

For (K2), let $a\in C_0(E^0)^+$ and write $a=\sum_{v\in E^0}{a_vp_v}$; again, by remark \ref{remark_F} it is sufficient to show the result for $a=p_v$ where $v\in E^0$. If $0<|r^{-1}(v)|<\infty$, then we have an equality as shown above. If $|r^{-1}(v)|=0$, then $\mathcal{F}_{c,\beta}(\tau)(p_v)=0\leq \tau(p_v)$. If $|r^{-1}(v)|=\infty$, then

$$\mathcal{F}_{c,\beta}(\tau)(p_v)=\lim_{D\to r^{-1}(v)}\sum_{e\in D} c(e)^{-\beta}\tau_{s(e)}=\lim_{D\to r^{-1}(v)}\sum_{e\in D}\omega(s_es_e^*)=$$
$$=\lim_{D\to r^{-1}(v)}\sum_{e\in D}\omega(p_vs_es_e^*)\leq \omega(p_v)=\tau(p_v).$$

To see the inequality above, we observe that $s_es_e^*$ are mutually orthogonal projections that commute with $p_v$ so that
$$p_v-\sum_{e\in D}p_vs_es_e^*=p_v\left(1-\sum_{e\in D}s_es_e^*\right)=\left(1-\sum_{e\in D}s_es_e^*\right)p_v\left(1-\sum_{e\in D}s_es_e^*\right)\geq 0.$$

Now, let $\tau$ be a tracial state on $C_0(E^0)$ satisfying (K1) and (K2). We will use Lemma \ref{lemma_exel_laca} and the discussion after it. Observe that $F_0=C_0(E^0)$ and let $\psi_0=\tau$. For $n\geq 1$, by Lemma \ref{lemma_funcionalFk} there exists a positive linear functional $\psi_n$ on $F_n$ defined by
$$\psi_n(s_{\mu}s_{\nu}^*)=[\mu=\nu]c(\mu)^{-\beta}\tau_{s(\mu)}.$$

Let us show by induction that there is a unique state $\varphi_n$ on $C_n$ such that the restriction to $F_n$ is $\psi_n$. For $n=1$, we use Lemma \ref{lemma_exel_laca} with $A=C_0(E^0)$, $I=F_1$, $B=C_1$, $\varphi=\tau$ and $\psi=\psi_1$. By Proposition \ref{prop_intersection}, in this case $A\cap I=\overline{\mathrm{span}}\{p_v:v\in E^0,\ 0<|r^{-1}(v)|<\infty\}$ and if $p_v\in A\cap I$ then
$$\psi(p_v)=\psi_1(p_v)=\psi_1(s_vs_v^*)=\tau_v=\tau(p_v).$$
Using the approximate unit given by Lemma \ref{lemma_approxunit}, for any $v\in E^0$ we have
$$\overline{\psi}(p_v)=\overline{\psi_1}(p_v)=\lim_{\lambda\to\infty}\psi_1(p_vu_{\lambda})=\lim_{D\to r^{-1}(v)}\sum_{e\in D}\psi_1(s_es_e^*)=$$
$$=\lim_{D\to r^{-1}(v)}\sum_{e\in D}c(e)^{-\beta}\tau_{s(e)}=\mathcal{F}_{c,\beta}(\tau)(p_v)\leq\tau(p_v),$$
where the last inequality is exactly (K2).

Now suppose that there is a unique state $\varphi_n$ on $C_n$ such that the restriction to $F_n$ is $\psi_n$ and let us show that this is also true for $n+1$. We set $A=C_n$, $I=F_{n+1}$, $B=C_{n+1}$, $\varphi=\varphi_n$ and $\psi=\psi_{n+1}$ on Lemma \ref{lemma_exel_laca}. By Proposition \ref{prop_intersection}, we have that $A\cap I=\overline{\mathrm{span}}\{s_{\mu}s_{\nu}^*:\mu,\nu\in E^{n},s(\mu)=s(\nu),|\mu|=|\nu|,0<|r^{-1}(s(\mu))|<\infty\}$. Let $s_{\mu}s_{\nu}^*\in A\cap I$. 
Since $0<|r^{-1}(s(\mu))|<\infty$ we have that
$$\psi(s_{\mu}s_{\nu}^*)=\psi_{n+1}(s_{\mu}s_{\nu}^*)=\sum_{e\in r^{-1}(s(\mu))}\psi_{n+1}(s_{\mu e}s_{\nu e}^*)=\sum_{e\in r^{-1}(s(\mu))}[\mu e=\nu e]c(\mu e)^{-\beta}\tau_{s(\mu e)}=$$
$$=\sum_{e\in r^{-1}(s(\mu))}[\mu=\nu]c(\mu)^{-\beta}c(e)^{-\beta}\tau_{s(e)}=[\mu=\nu]c(\mu)^{-\beta}\sum_{e\in r^{-1}(s(\mu))}c(e)^{-\beta}\tau_{s(e)}=$$
$$=[\mu=\nu]c(\mu)^{-\beta}\mathcal{F}_{c,\beta}(\tau)(p_{s(\mu)})=[\mu=\nu]c(\mu)^{-\beta}\tau(p_{s(\mu)})=\psi_n(s_{\mu}s_{\nu}^*)=\varphi_n(s_{\mu}s_{\nu}^*).$$
Again, using the approximate unit given by Lemma \ref{lemma_approxunit}, if $s_{\mu}s_{\nu}^*\in C_n$, then
$$\overline{\psi}(s_{\mu}s_{\nu}^*)=\overline{\psi_{n+1}}(s_{\mu}s_{\nu}^*)=\lim_{\lambda\to\infty}\psi_{n+1}(s_{\mu}s_{\nu}^*u_{\lambda})=\lim_{D\to r^{\leq n+1-|\nu|}(s(\mu))}\sum_{\zeta \in D}\psi_{n+1}(s_{\mu\zeta}s_{\nu\zeta}^*)=$$
$$=\lim_{D\to r^{\leq n+1-|\nu|}(s(\nu))}\sum_{\zeta \in D}[\mu\zeta=\nu\zeta]c(\nu \zeta)^{-\beta}\tau_{s(\nu\zeta)}=$$
$$=\lim_{D\to r^{\leq n+1-|\nu|}(s(\nu))}\sum_{\zeta \in D}[\mu=\nu]c(\nu)^{-\beta}c(\zeta)^{-\beta}\tau_{s(\zeta)}=$$
$$=[\mu=\nu]c(\nu)^{-\beta}\lim_{D\to r^{\leq n+1-|\nu|}(s(\nu))}\sum_{\zeta \in D}c(\zeta)^{-\beta}\tau_{s(\zeta)}\leq$$
$$\leq[\mu=\nu]c(\nu)^{-\beta}\tau_{s(\nu)}=\varphi_n(s_{\mu}s_{\nu}^*),$$
where the inequality is given by Lemma \ref{lemma_inequalityF}, which is a consequence of (K2).

By the description of the core $C^*(E)^{\gamma}$ as an inductive limit of the $C_n$, we can define a state $\omega$ as the inductive limit of $\varphi_n$. By construction, $\omega$ satisfies (\ref{eq:CoreCondition}) and, since each $\varphi_n$ is uniquely defined by (\ref{eq:CoreCondition}), so is $\omega$.

Finally, it is easily seen that the correspondence built preserves convex combinations by construction.

\end{proof}

\section{Ground states}\label{section_ground_states}

In this section, we let a function $c:E^1\to\mathbb{R}_+^*$ be given and define a one-parameter group of automorphisms $\sigma$ as in the last section.

The following definition of a ground state will be used \cite{MR1441540}.

\begin{definition}
We say that $\phi$ is a $\sigma$-\emph{ground state} if for all $a,b\in C^*(E)^a$, the entire analytic function $\zeta\mapsto \phi(a\sigma_{\zeta}(b))$ is uniformly bounded in the region $\{\zeta\in\mathbb{C}:\mathrm{Im}(\zeta)\geq 0\}$, where $C^*(E)^a$ is the set of analytic elements for $\sigma$.
\end{definition}

\begin{proposition}\label{prop:GroundState1}
If $\tau$ is a tracial state on $C_0(E^0)$ such that $\mathrm{supp}(\tau)\subseteq\{v\in E^0:v\ \mathrm{is\ singular}\}$ then there is a unique state $\phi$ on $C^*(E)$ such that 
\begin{itemize}
\item[(i)] $\phi(p_v)=\tau(p_v)$ for all $v\in E^0$;
\item[(ii)] $\phi(s_{\mu}s_{\nu}^*)=0$ if $|\mu|>0$ or $|\nu|>0$.
\end{itemize}
\end{proposition}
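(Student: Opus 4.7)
The plan is to prove uniqueness by a straightforward continuity argument, and then build $\phi$ as a composition $\phi=\omega\circ\Phi$ with the conditional expectation from Proposition \ref{prop_conditional_expectation}, where $\omega$ is a state on $C^*(E)^{\gamma}$ constructed via the inductive limit description of the core and the Exel--Laca lemma.

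For uniqueness, conditions (i) and (ii) specify $\phi(s_{\mu}s_{\nu}^*)$ for every pair of paths $\mu,\nu\in E^*$ with $s(\mu)=s(\nu)$: either $\mu=\nu=v\in E^0$ and $\phi(p_v)=\tau(p_v)$, or at least one of $|\mu|,|\nu|$ is positive and $\phi(s_{\mu}s_{\nu}^*)=0$. Since $C^*(E)=\overline{\mathrm{span}}\{s_{\mu}s_{\nu}^*:s(\mu)=s(\nu)\}$ by Proposition \ref{prop_multiplication} and $\phi$ is continuous, this pins $\phi$ down completely.

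For existence, I would build a state $\omega$ on $C^*(E)^{\gamma}$ satisfying $\omega|_{C_0(E^0)}=\tau$ and $\omega(s_{\mu}s_{\nu}^*)=0$ whenever $|\mu|=|\nu|\geq 1$; then $\phi=\omega\circ\Phi$ works, because $\Phi$ sends $s_{\mu}s_{\nu}^*$ with $|\mu|\neq|\nu|$ to zero. To produce $\omega$, set $\psi_0=\tau$ on $F_0=C_0(E^0)$ and $\psi_n\equiv 0$ on $F_n$ for $n\geq 1$ (each $\psi_n$ is trivially a positive linear functional). Then inductively apply Lemma \ref{lemma_exel_laca} with $A=C_n$, $I=F_{n+1}$, $B=C_{n+1}$, $\varphi=\varphi_n$ (the state on $C_n$ built in the previous step, with $\varphi_0=\tau$), and $\psi=\psi_{n+1}$, using the approximate unit from Lemma \ref{lemma_approxunit}. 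By Proposition \ref{prop_intersection} and Proposition \ref{prop_Ck_description}(b), $A\cap I=F_n\cap F_{n+1}=\bigoplus\{F_n(v):0<|r^{-1}(v)|<\infty\}$; the agreement $\varphi_n|_{A\cap I}=\psi_{n+1}|_{A\cap I}$ is where the support hypothesis on $\tau$ enters, since at the base step $n=0$ one needs $\tau(p_v)=0$ for every $v$ with $0<|r^{-1}(v)|<\infty$, which holds because such $v$ are not singular. For $n\geq 1$, both sides vanish on $F_n(v)$ for such $v$ by the inductive hypothesis $\varphi_n|_{F_n}=\psi_n=0$. The inequality $\overline{\psi_{n+1}}(a)\leq\varphi_n(a)$ for $a\in C_n^+$ holds trivially since $\psi_{n+1}=0$. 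Taking the inductive limit of the $\varphi_n$ in the sense of Proposition \ref{prop_Ck_description}(a) yields the desired state $\omega$.

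The only genuine obstacle is the base case of the induction: verifying that $\tau$ vanishes on $C_0\cap F_1$. This is exactly where the hypothesis $\mathrm{supp}(\tau)\subseteq\{v:v\text{ is singular}\}$ is used, and everything else in the construction is a mechanical application of Exel--Laca together with the inductive limit machinery of Section \ref{section_graph_algebra}.
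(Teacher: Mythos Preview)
Your proposal is correct and follows essentially the same route as the paper: reduce to the core via $\phi=\omega\circ\Phi$, build $\omega$ inductively on the $C_n$ using the Exel--Laca lemma with $\psi_0=\tau$ and $\psi_n\equiv 0$ for $n\geq 1$, and take the inductive limit. Your write-up is in fact more explicit than the paper's about the one nontrivial verification---namely that the support hypothesis on $\tau$ is exactly what makes $\varphi_0=\tau$ agree with $\psi_1=0$ on $C_0\cap F_1=\overline{\mathrm{span}}\{p_v:0<|r^{-1}(v)|<\infty\}$---and your uniqueness argument via density of the spanning set is a minor but clean variant of the paper's observation that (ii) forces $\phi=\phi|_{C^*(E)^{\gamma}}\circ\Phi$.
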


\begin{proof}
First, observe that a state $\phi$ satisfying (ii) is uniquely determined by its values on $C^*(E)^{\gamma}$ because (ii) implies that $\phi=\phi|_{C^*(E)^{\gamma}}\circ\Phi$, where $\Phi$ is the conditional expectation given by Proposition \ref{prop_conditional_expectation}.

Given $\tau$ as in the statement of the proposition, a state $\omega$ on $C^*(E)^{\gamma}$ can be built in the same way as in the proof of Theorem \ref{thm:KMSparte2}. For each $n$, use Lemma \ref{lemma_exel_laca} with $A=C_n$, $B=C_{n+1}$, $I=F_{n+1}$, $\psi_n\equiv 0$ and $\varphi_n$ is given by the previous step, where for the first step we have $\varphi_0=\tau$. For $\omega=\varinjlim \varphi_n$, we have that $\phi=\omega\circ\Phi$ satisfies (i) and (ii) and is unique by construction.
\end{proof}

\begin{proposition}\label{prop:GroundState2}
If $c$ is such that $c(e)>1$ for all $e\in E^1$, then a state $\phi$ on $C^*(E)$ is a $\sigma$-ground state for $\sigma$ if and only if $\phi(s_{\mu}s_{\nu}^*)=0$ whenever $|\mu|>0$ or $|\nu|>0$.
\end{proposition}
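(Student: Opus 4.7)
The plan is to work in the GNS representation $(\pi_\phi, H_\phi, \Omega_\phi)$ of $\phi$ and invoke the standard characterization from \cite{MR1441540}: $\phi$ is a $\sigma$-ground state if and only if $\phi$ is $\sigma$-invariant and the self-adjoint generator $H$ of the unitary implementation $U_t = e^{itH}$ of $\sigma_t$ (with $U_t\Omega_\phi = \Omega_\phi$) satisfies $H \geq 0$. The key observation is that each $s_\mu s_\nu^*$ is an eigenvector of $\sigma_t$ with eigenvalue $(c(\mu)/c(\nu))^{it}$, so that when $\pi_\phi(s_\mu s_\nu^*)\Omega_\phi \neq 0$, it is an eigenvector of $H$ with eigenvalue $\log c(\mu) - \log c(\nu)$.

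For the forward direction, I would assume $\phi$ is a ground state and hence $H \geq 0$. Applied to $s_\mu^*$, the vector $\pi_\phi(s_\mu^*)\Omega_\phi$ has $H$-eigenvalue $-\log c(\mu)$, which is strictly negative whenever $|\mu| \geq 1$ (since the hypothesis $c(e) > 1$ forces $c(\mu) > 1$); positivity of $H$ then forces $\pi_\phi(s_\mu^*)\Omega_\phi = 0$. Consequently, $\phi(s_\mu s_\nu^*) = \langle \pi_\phi(s_\mu)\pi_\phi(s_\nu^*)\Omega_\phi, \Omega_\phi\rangle = 0$ whenever $|\nu| > 0$, and the case $|\mu| > 0$ follows from $\phi(s_\mu s_\nu^*) = \overline{\phi(s_\nu s_\mu^*)}$.

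For the converse, suppose $\phi(s_\mu s_\nu^*) = 0$ whenever $|\mu| + |\nu| > 0$. Then $\sigma$-invariance of $\phi$ is immediate on each monomial $s_\mu s_\nu^*$ (both sides of $\phi \circ \sigma_t = \phi$ vanish in the nontrivial case, while $\sigma_t$ fixes the vertex projections), which yields the unitary implementation $U_t = e^{itH}$ with $U_t\Omega_\phi = \Omega_\phi$. A direct calculation gives $\|\pi_\phi(s_\mu s_\nu^*)\Omega_\phi\|^2 = \phi(s_\nu s_\mu^* s_\mu s_\nu^*) = \phi(s_\nu s_\nu^*)$, which vanishes when $|\nu| > 0$. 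Hence the dense subspace $\pi_\phi(A_0)\Omega_\phi$ (where $A_0 = \mathrm{span}\{s_\mu s_\nu^*\}$) collapses to the linear span of the vectors $\pi_\phi(s_\mu)\Omega_\phi$, each an eigenvector of $H$ with non-negative eigenvalue $\log c(\mu) \geq 0$. Using orthogonality of distinct $H$-spectral subspaces to diagonalize the quadratic form, $\langle H\xi, \xi\rangle \geq 0$ on this dense span; since its spanning vectors are entire analytic for $H$, the span is a core, and $H \geq 0$ follows.

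The main obstacle is the formal bridge between the boundedness definition of a ground state adopted in the paper and the ``$H \geq 0$'' GNS-characterization of \cite{MR1441540}. Once that standard equivalence is invoked, the rest is a clean eigenvector calculation in $H_\phi$, with the only subtlety being the fact that a dense subspace of entire analytic vectors is automatically a core for a self-adjoint operator — a standard consequence of Nelson's analytic vector theorem.
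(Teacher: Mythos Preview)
Your argument is correct, but it takes a genuinely different and considerably heavier route than the paper does. The paper works directly with the boundedness definition: for the forward direction it simply computes
\[
|\phi(s_\mu\sigma_\zeta(s_\nu^*))| = c(\nu)^{\mathrm{Im}(\zeta)}\,|\phi(s_\mu s_\nu^*)|,
\]
observes this is unbounded on the upper half-plane when $|\nu|>0$ (since then $c(\nu)>1$), and concludes; the case $|\mu|>0$ is symmetric. For the converse it notes that when $|\mu|=|\nu|=0$ the function is constant, and then checks the definition on monomials (implicitly relying on the fact that the span of the $s_\mu s_\nu^*$ is a $\sigma$-invariant dense $*$-subalgebra of analytic elements, which suffices). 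No GNS construction, no Stone generator, no Nelson theorem.

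Your approach via the characterization ``$\phi$ is a ground state iff $\phi$ is $\sigma$-invariant and the GNS generator $H$ is positive'' is perfectly valid and is the conceptually right picture: the monomials are eigenvectors of $\sigma$, hence their GNS images are eigenvectors of $H$, and the sign of the eigenvalue is governed by $\log c(\mu)-\log c(\nu)$. The forward direction becomes the trivial remark that a positive operator has no negative eigenvalues; the converse reduces to showing that $H_\phi$ is spanned by eigenvectors with non-negative eigenvalue, which you establish correctly by the computation $\|\pi_\phi(s_\mu s_\nu^*)\Omega_\phi\|^2=\phi(s_\nu s_\nu^*)$ together with the core argument via Nelson. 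This viewpoint generalizes well (it is how one typically handles ground states for quasi-free or gauge-type dynamics on Pimsner algebras), but for this particular proposition it imports substantially more machinery than the two-line direct computation the paper gives.
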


\begin{proof}
If $\phi$ is a ground state then for each pair $\mu,\nu\in E^*$ the function $\zeta\mapsto |\phi(s_{\mu}\sigma_{\zeta}(s_{\nu}^*))|$ is bounded on the upper half of the complex plane. If $\zeta = x+iy$ then
$$|\phi(s_{\mu}\sigma_{\zeta}(s_{\nu}^*))|=|\phi(s_{\mu}c(\nu)^{-i\zeta}s_{\nu}^*)|=|c(\nu)^{y-ix}\phi(s_{\mu}s_{\nu}^*)|=c(\nu)^y|\phi(s_{\mu}s_{\nu}^*)|.$$
If $|\nu|>0$, we have that $c(\nu)>1$ and so the only possibility for the above function to be bounded is if $\phi(s_{\mu}s_{\nu}^*)=0$. It is shown analogously that if $|\mu|>0$ then $\phi(s_{\mu}s_{\nu}^*)=0$.

For the converse, observe that if $|\mu|=|\nu|=0$ then $|\phi(s_{\mu}\sigma_{\zeta}(s_{\nu}^*))|=|\phi(s_{\mu}s_{\nu}^*|\leq 1$. It can be now readily verified that if $\phi(s_{\mu}s_{\nu}^*)=0$ whenever $|\mu|>0$ or $|\nu|>0$ then $\phi$ is a ground state.
\end{proof}

\begin{theorem}\label{thm:GroundStates}
If $c$ is such that $c(e)>1$ for all $e\in E^1$ then there is a bijective correspondence, given by restriction, between $\sigma$-ground states $\phi$ and tracial states $\tau$ on $C_0(E^0)$ such that $\mathrm{supp}(\tau)\subseteq\{v\in E^0:v\ \mathrm{is\ singular}\}$.
\end{theorem}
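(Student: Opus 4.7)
The plan is to assemble the correspondence directly from Propositions \ref{prop:GroundState1} and \ref{prop:GroundState2}. In one direction, I send a tracial state $\tau$ on $C_0(E^0)$ with $\mathrm{supp}(\tau)\subseteq\{v\in E^0:v\ \mathrm{is\ singular}\}$ to the unique state $\phi$ on $C^*(E)$ produced by Proposition \ref{prop:GroundState1}; this $\phi$ is automatically a $\sigma$-ground state by Proposition \ref{prop:GroundState2}, since condition (ii) of Proposition \ref{prop:GroundState1} is precisely the characterization of ground states. In the other direction, I restrict a $\sigma$-ground state $\phi$ to $C_0(E^0)$, using the identification $C_0(E^0)\cong\overline{\mathrm{span}}\{p_v\}_{v\in E^0}$, to obtain $\tau=\phi|_{C_0(E^0)}$, which is automatically tracial because $C_0(E^0)$ is abelian.

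The only concrete verification is that the restriction $\tau$ has support inside the singular vertices. If $v\in E^0$ is not singular, then $0<|r^{-1}(v)|<\infty$, so relation (3) in the definition of $C^*(E)$ gives $p_v=\sum_{e\in r^{-1}(v)}s_es_e^*$ as a finite sum, whence
$$\tau(p_v)=\phi(p_v)=\sum_{e\in r^{-1}(v)}\phi(s_es_e^*)=0$$
by Proposition \ref{prop:GroundState2}, since $|e|=1>0$ for every $e\in E^1$. Hence $\tau$ has the required support condition, and the restriction map genuinely lands in the class of tracial states described in the statement.

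Finally I check that the two constructions are mutual inverses. Starting from $\tau$, the state $\phi$ produced via Proposition \ref{prop:GroundState1} satisfies $\phi(p_v)=\tau(p_v)$ by its condition (i), so its restriction to $C_0(E^0)$ equals $\tau$. Conversely, given a $\sigma$-ground state $\phi$ with restriction $\tau$, the state $\phi$ satisfies the hypotheses (i) and (ii) of Proposition \ref{prop:GroundState1} for this $\tau$: (i) holds by the definition of $\tau$, and (ii) holds by Proposition \ref{prop:GroundState2}. The uniqueness clause of Proposition \ref{prop:GroundState1} then forces $\phi$ to coincide with the state built from $\tau$. No real obstacle appears here; the argument is essentially a packaging of the two preceding propositions, and the singular-support computation above is the only nonformal step.
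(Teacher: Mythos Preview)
Your proof is correct and follows essentially the same approach as the paper: both derive the result directly from Propositions \ref{prop:GroundState1} and \ref{prop:GroundState2}, with the one nontrivial check being that a ground state vanishes at $p_v$ for non-singular $v$ via the Cuntz--Krieger relation $p_v=\sum_{e\in r^{-1}(v)}s_es_e^*$. You have simply spelled out the mutual-inverse verification that the paper leaves implicit.
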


\begin{proof}
This is an immediate consequence of Propositions \ref{prop:GroundState1} and \ref{prop:GroundState2}. Just note that if $\phi$ is a $\sigma$-ground state and $v\in E^0$ is not singular then
$$\phi(p_v)=\phi\left(\sum_{e\in r^{-1}(v)} s_e s_e^* \right)=0.$$
\end{proof}

\section{Examples}\label{section_examples}

In this section we give two examples with infinite graphs and study the KMS states on the C*-algebras associated to these graphs.

\begin{example}[The Cuntz algebra $\mathcal{O}_{\infty}$]
Let $E^0=\{v\}$ be any unitary set and $E^1=\{e_n\}_{n\in\mathbb{N}}$ any countably infinite set with $r(e_n)=s(e_n)=v$ $\forall n\in\mathbb{N}$, then $C^*(E)\cong \mathcal{O}_{\infty}$.

If $c(e_n)=e$ (Euler's number) then we have the usual gauge action. In this case, $\mathcal{F}_{c,\beta}(\tau)(p_v)=\infty$ so that condition (K2) from Theorem \ref{thm:KMSparte2} is not satisfied and we have no KMS states for finite $\beta$. Since we have only one state on $C_0(E^0)$ and $v$ is a singular vertex, by Theorem \ref{thm:GroundStates} there exists a unique ground state.

Now if $c(e_n)=a_n$ where $a_n\in(1,\infty)$ is such that there is $\beta>0$ for which $\sum_{n=0}^\infty a_n^{-\beta}$ converges, then there exists $\beta_0>0$ such that $\sum_{n=0}^\infty a_n^{-\beta}=1$. Observing that $\mathcal{F}_{c,\beta}(\tau)(p_v)=\sum_{n=0}^\infty a_n^{-\beta}$ and using again the fact that there exists only one state on $C_0(E^0)$, we conclude from Theorems \ref{thm:KMSparte1} and \ref{thm:KMSparte2} that there is no KMS state for $\beta<\beta_0$, there exists a unique KMS state for each $\beta\geq \beta_0$ and, as with the gauge action, there is a unique ground state.
\end{example}

\begin{example}[A graph with infinitely many sources]
Let $E^0=\{v_n\}_{n\in\mathbb{N}}$ and $E^1=\{e_n\}_{n\in\mathbb{N}\setminus\{0\}}$ be countably infinite sets and define $r(e_n)=v_0$ and $s(e_n)=v_n$ for all $n\in\mathbb{N}\setminus\{0\}$.

Again, let $a_n\in(1,\infty)$, $n\in\mathbb{N}\setminus\{0\}$, be such that $\sum_{n=1}^\infty a_n^{-\beta}$ converges for some $\beta >0$. For $n\neq 0$ we have that $\mathcal{F}_{c,\beta}(\tau)(p_{v_n})=0$ and for $n=0$ we have $\mathcal{F}_{c,\beta}(\tau)(p_{v_0})=\sum_{n=1}^\infty a_n^{-\beta}\tau_{v_n}$. Condition (K1) of Theorem \ref{thm:KMSparte2} is trivially satisfied, and for condition (K2) we need $\sum_{n=1}^\infty a_n^{-\beta}\tau_{v_n}\leq \tau_{v_0}$.

If $\tau_{v_0}> 0$, since $0\leq\tau_{v_n}\leq 1$ for all $n$ there exists $\beta_0>0$ such that $\sum_{n=1}^\infty a_n^{-\beta_0}\tau_{v_n} = \tau_{v_0}$ so that (K2) is verified for all $\beta\geq \beta_0$ and so there are infinitely many KMS states. And for $\beta<\beta_0$ (K2) is not verified so that there are no KMS states.

For ground states, since all vertices are singular, we have no restriction on $\tau_{v_0}$; every state $\tau$ on $C_0(E^0)$ gives a ground state on $C^*(E)$.
\end{example}

\bibliographystyle{abbrv}
\bibliography{kms_states_ref}

\end{document}